\documentclass[11pt,a4paper]{elsarticle}
\usepackage{amsmath,amssymb}
\usepackage{amsthm}
\usepackage[a4paper]{geometry}
\usepackage{xcolor}
\usepackage{hyperref}
\textwidth=16cm
\textheight=23cm
\parindent=16pt
\oddsidemargin=-0.5cm
\evensidemargin=-0.5cm
\topmargin=-0.5cm
\allowdisplaybreaks
\newtheorem{theorem}{Theorem}
\newtheorem{proposition}{Proposition}[section]
\newtheorem{corollary}{Corollary}[section]
\newtheorem{lemma}{Lemma}[section]

\newtheorem{definition}{Definition}[section]
\usepackage{physics}
\theoremstyle{remark}
\usepackage{enumitem}
\newtheorem{remark}{Remark}[section]
\numberwithin{equation}{section}
\newcommand{\iu}{{i\mkern1mu}}

\begin{document}
\title{Limit behavior of a class of Cantor-integers}
\author[hzau]{Jin CHEN}
\ead{cj@mail.hzau.edu.cn}
\author[hzau]{Xin-Yu WANG}
\ead{xinyuwang@webmail.hzau.edu.cn}

\address[hzau]{College of Science, Huazhong Agricultural University, Wuhan 430070, China.}

\begin{keyword}
Cantor-integers\sep Pointwise density \sep Self-similar measure \sep Mellin-Perron formula
\MSC[2010]{11A63, 28A80, 42A16}
\end{keyword}

\begin{abstract}
In this paper, we study a class of Cantor-integers $\{C_n\}_{n\geq 1}$ with the base conversion function $f:\{0,\dots,m\}\to \{0,\dots,p\}$ being strictly increasing and satisfying $f(0)=0$ and $f(m)=p$.  Firstly we provide an algorithm to compute the superior and inferior of the sequence $\left\{\frac{C_n}{n^{\alpha}}\right\}_{n\geq 1}$ where $\alpha =\log_{m+1}^{p+1}$, and obtain the exact values of the superior and inferior when $f$ is a class of quadratic function. Secondly we show that the sequence $\left\{\frac{C_n}{n^{\alpha}}\right\}_{n\geq 1}$ is dense in the close interval with the endpoints being its inferior and superior respectively. As a consequence, (i) we get the upper and lower pointwise density $1/\alpha$-density of the self-similar measure supported on $\mathfrak{C}$ at $0$, where $\mathfrak{C}$ is the Cantor set induced by Cantor-integers. (ii) the sequence $\{\frac{C_n}{n^{\alpha}}\}_{n\geq 1}$ does not have cumulative distribution function but have logarithmic distribution functions (given by a specific Lebesgue integral). Lastly we obtain the Mellin-Perron formula for the summation function of Cantor-integers. In addition, we investigate some analytic properties of the limit function induced by Cantor-integers.
\end{abstract}

\maketitle
\section{Introduction}

Given two positive integer $m$ and $p$ with $m<p$.  For any non negative integer $n$, the $(m+1)$-ary expansion of $n=\sum_{j=0}^{k}\epsilon_i (m+1)^j$ is denoted by
$[\epsilon_k \cdots \epsilon_0]_{m+1}$
for some $k\geq 0$, $\epsilon_j\in\{0,1,\dots,m\}$ and $\epsilon_k>0$. The integer sequence $\{C_n\}_{n\geq 0}$ is called \emph{Cantor-integers} if there exists a non-decreasing function $f:\{0,\dots,m\}\to \{0,\dots,p\}$ such that the $f(m)\leq p$ and $(p+1)$-ary expansion of $C_n$ is $[f(\epsilon_k) \cdots f(\epsilon_0)]_{p+1}$ whenever $n=[\epsilon_k \cdots \epsilon_0]_{m+1}$. Here $f$ will be called \emph{base conversion} function and note that $f$ is not unique.

 To avoid triviality we always assume $f(x)=x$ and $f(m)=p$ do not hold simultaneously. When $f(x)=2x$, $m=1$ and $p=f(m)=2$, we have that $C_{2n+i}=3C_{n}+2i$ for every $n\geq 0$ and $i\in \{0,1\}$. Namely, $\{C_n\}_{n\geq 0}$ is the only non-negative integers whose ternary expansion contains no $1$'s (cf. \cite[A005823]{SIS}), which is an analogue of Cantor’s triadic set. That's why it is called Cantor-integers.  Throughout the paper, set $\alpha:= \log_{m+1}^{p+1}$. It is easy to get that the order of growth for $C_n$ is $n^{\alpha}$ with $\alpha>1$. Recently, L\"{u} et al. in \cite{LCWW20} introduced quasi-linear sequence $g(n)$, in which the order of growth of quasi-linear sequence $\{g(n)\}$ must be strictly larger than $\{g(n)-g(n-1)\}$. They showed that 
\[ \left\{\frac{g(n)}{n^{\beta}}\right\}_{n\geq 1}\text{ is dense in }\left[\inf_{n\geq 1} \frac{g(n)}{n^{\beta}}, \sup_{n\geq 1}\frac{g(n)}{n^{\beta}}\right],\]
where $\beta$ is the order of growth of $g(n)$. It is not hard to check that  for every Cantor-integers sequence $\{C_n\}$, the order of growth of $\{C_n-C_{n-1}\}$ is also $\alpha$. This imlpies $C_n$ is not quasi-linear. In fact, as an arithmetic function,  $C_n$ has been widely investigated:
\begin{itemize}
\item When $f(x)=x$, $m=1$ and $p\geq f(m)+1=2$, $\{C_n\}_{n\geq 0}$ is the lexicographically earliest increasing sequence of nonnegative numbers that contains no arithmetic progression of length $3$ (cf. \cite[A005836]{SIS}). Flajolet et al. in \cite[Section 5]{FGK94} considered the Mellin-Perron formulae of the summation function $S(n):=\sum_{k<n} C_k$.
\item When $f(x)=2x$, $m=1$ and $p=f(m)+1=3$, then $\alpha =2$. Gawron and Ulas in \cite[Section 3]{GU16} investigated the density property of $\{C_n/n^2\}_{n\geq 1}$ (here they used the notation $\{b_k/k^2\}_{k\geq 1}$). They obtained that
\[ \left\{\frac{C_n}{n^2}\right\}_{n\geq 1}\text{ is contained and dense in }\left[\frac{2}{3},2\right].\]
\item When $f(x)=2x$ and $p\in\{f(m),f(m)+1\}$, Cao and Li in \cite{CL22}  generalized Gawron and Ulas's result in \cite[Section 3]{GU16} and reviewed the result about density property in the sense of self-similar measure. In details, they showed that
\[ \left\{\frac{C_n}{n^\alpha}\right\}_{n\geq 1}\text{ is contained and dense in }\left[\frac{f(m)}{p},f(1)\right], \]
and 
\[ \left\{\frac{x}{\mu_{\mathfrak{C}}^{\alpha}([0,x])}:x\in\mathfrak{C}\cap\left[\frac{2}{p+1},1\right] \right\} = \left[\frac{f(m)}{p},f(1)\right],\]
where $\mathfrak{C}$ and $\mu_{\mathfrak{C}}$ are defined as in \eqref{def:ifs} and \eqref{def:self-similar_measure} respectively.
\item When $f(x)=qx+r$ with $q\geq 2$ and $r\geq 0$, and $p+1\leq  q+f(m)$, Cao and Yu in \cite{CY22} generalized Cao and Li's results in \cite{CL22} and they showed that
\begin{equation}\label{eqn:linearcase_for_dense}
\left\{\frac{C_n}{n^\alpha}\right\}_{n\geq 1}\text{ is contained and dense in }\left[\frac{f(m)}{p},f(1)+\frac{f(0)}{p}\right],
\end{equation}

and 
\begin{equation}\label{eqn:linearcase_for_self-similar_measure}
\left\{\frac{x}{\mu_{\mathfrak{C}}^{\alpha}([0,x])}:x\in\mathfrak{C}\cap\left[\frac{f(1)}{p+1},1\right] \right\} = \left[\frac{f(m)}{p},f(1)+\frac{f(0)}{p}\right].
\end{equation}
\end{itemize}
Let the homogeneous Cantor set $\mathfrak{C} := \mathfrak{C}_{f,m,p}$ be the self-similar set generated by the iterated function system (IFS)
\begin{equation}\label{def:ifs}
 S_i(x) := \frac{x+f(i)}{1+p},i=0,\dots,m,
\end{equation}
and $\mu_{\mathfrak{C}}$ be the unique self-similar measure supported on $\mathfrak{C}$ satisfying 
\begin{equation}\label{def:self-similar_measure}
 \mu_{\mathfrak{C}} =\sum_{i=0}^m \frac{1}{m+1} \mu_{\mathfrak{C}}\circ {S_i}^{-1},
\end{equation}

Much research has focused on the Cantor set satisfying $S_0(0)=1$ and $S_m(1)=1$ (or equivalently $f(0)=0$ and $f(m)=p$) such as \cite{AS99} and \cite{KLY21}. In this paper, we are concerned with the case that $f$ is strictly increasing and satisfies $f(0)=0$ and $f(m)=p$.  The first term of $\{C_n\}_{n\geq 0}$ is always zero and will be omitted. For simplicity, write $\Sigma_m:=\{0,1,\dots,m\}$ and $\Sigma_m^{+} := \{1,\dots,m\}$. First we give an algorithm to compute the superior and inferior of $\{\frac{C_n}{n^{\alpha}}\}_{n\geq 1}$, and obtain the exact values when the base conversion function satisfies \eqref{Cond:1}.
\begin{theorem}\label{thm:boundofC_n/nalpha}

The superior and inferior of $\{\frac{C_n}{n^{\alpha}}\}_{n\geq 1}$ are 
\[ \max_{\epsilon\in\Sigma_m^{+}} \frac{C_{\epsilon}}{{\epsilon}^{\alpha}}
 \text{ and }\min_{1 \leq n < (m+1)^{\ell_0}} \frac{C_{n}+1}{(n+1)^{\alpha}}\] respectively,
 where $\ell_0$ is the smallest positive integer satisfying
 \[ \alpha\frac{(f(m)+1)^{\ell_0}}{(m+1)^{\ell_0+1}} \geq \max_{\epsilon \in \Sigma_m\backslash\{m\} }\frac{f(m)-f(\epsilon)}{m-\epsilon}. \]
In particular, when the base conversion function satisfies \eqref{Cond:1}, then we have
\[ \inf_{n\geq 1}\frac{C_n}{n^{\alpha}} =\min_{1\leq n < (m+1)^2} \frac{C_{n}+1}{(n+1)^{\alpha}} =\begin{cases}
1, &\text{ if }a\leq 0,\\
\min\left\{1,\min_{\epsilon\in\Sigma_m}\frac{C_{m+\epsilon+1}+1}{(m+\epsilon+2)^{\alpha}}\right\}, &\text{ if }a>0. \end{cases}\]
\end{theorem}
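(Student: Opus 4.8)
The plan is to handle the supremum and the infimum separately, both resting on the digit recursion $C_{(m+1)q+i}=(p+1)C_q+f(i)$ (valid for $q\ge0$, $i\in\Sigma_m$) together with the identity $(m+1)^\alpha=p+1$. For the supremum, set $M:=\max_{\epsilon\in\Sigma_m^+}C_\epsilon/\epsilon^\alpha$; since the single-digit values $C_\epsilon=f(\epsilon)$ lie in the sequence we have $\sup\ge M$, so it suffices to prove $C_n\le Mn^\alpha$ for all $n\ge1$ by strong induction. Writing $n=(m+1)q+i$ with $q\ge1$, the hypothesis $C_q\le Mq^\alpha$ and $(p+1)q^\alpha=((m+1)q)^\alpha$ reduce the step to $f(i)\le M[((m+1)q+i)^\alpha-((m+1)q)^\alpha]$. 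Convexity of $t\mapsto t^\alpha$ bounds the bracket below by $\alpha i((m+1)q)^{\alpha-1}\ge\alpha i(m+1)^{\alpha-1}$, so it is enough that $M\ge f(i)/(\alpha(m+1)^{\alpha-1}i)$, which follows from $M\ge f(i)/i^\alpha$ and the elementary inequality $i^{\alpha-1}\le(m+1)^{\alpha-1}\le\alpha(m+1)^{\alpha-1}$. Hence $\sup=M$.

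For the infimum, put $\psi(n):=C_n/n^\alpha$ and $\phi(n):=(C_n+1)/(n+1)^\alpha$, and first show $\inf_{n\ge1}\psi(n)=\inf_{n\ge1}\phi(n)$. The lower bound is immediate: as $\{C_n\}$ is strictly increasing, $C_N\ge C_{N-1}+1$, whence $\psi(N)\ge\phi(N-1)\ge\inf_{n\ge0}\phi(n)$; and since $\phi(m)=(p+1)/(m+1)^\alpha=1=\phi(0)$ the term $n=0$ is redundant. For the reverse inequality I use the sequence $N_k:=(n+1)(m+1)^k-1$ (namely $n$ followed by $k$ digits $m$), for which the recursion gives $C_{N_k}=(C_n+1)(p+1)^k-1$; because $(p+1)^k=(m+1)^{k\alpha}$ one checks $\psi(N_k)\to\phi(n)$ as $k\to\infty$, so $\inf\psi\le\phi(n)$ for every $n$. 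This pins the infimum of the original sequence to $\inf_{n\ge1}\phi(n)$.

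It remains to collapse $\inf_{n\ge1}\phi(n)$ to the range $n<(m+1)^{\ell_0}$. Appending the digit $m$ is $\phi$-invariant, $\phi((m+1)n+m)=\phi(n)$, and the crux is the reduction lemma: if $q$ has at least $\ell_0$ digits in base $m+1$, then $\phi((m+1)q+i)\ge\phi(q)$ for every $i\in\Sigma_m$. After cross-multiplying and using $((m+1)(q+1))^\alpha=(p+1)(q+1)^\alpha$, this amounts to $(C_q+1)[(m+1)^\alpha-((m+1)-\tfrac{m-i}{q+1})^\alpha]\ge f(m)-f(i)$; convexity of $t\mapsto t^\alpha$ bounds the bracket below by $\alpha(m+1)^{\alpha-1}\tfrac{m-i}{q+1}$, reducing the claim to $\tfrac{C_q+1}{q+1}\,\alpha(m+1)^{\alpha-1}\ge\tfrac{f(m)-f(i)}{m-i}$. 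Here I expect the main obstacle: matching the threshold exactly. For a number with $L\ge\ell_0$ digits, bounding $C_q\ge f(1)(p+1)^{L-1}\ge(p+1)^{L-1}$ and $q+1\le(m+1)^{L}$ turns the left side into $\alpha(p+1)^{L-1}(m+1)^{\alpha-1-L}=\alpha(p+1)^{L}/(m+1)^{L+1}$, which is increasing in $L$; this is precisely why $\ell_0$ is chosen as the least integer making $\alpha(p+1)^{\ell_0}/(m+1)^{\ell_0+1}$ dominate $\max_{\epsilon\ne m}(f(m)-f(\epsilon))/(m-\epsilon)$. Iterating the lemma (stripping one least-significant digit at a time, the case $i=m$ being exact) then gives $\phi(n)\ge\min_{1\le n'<(m+1)^{\ell_0}}\phi(n')$ for every $n$, hence the stated formula.

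For the quadratic family satisfying \eqref{Cond:1}, the plan is to verify directly from the displayed inequality that $\ell_0=2$, so the infimum is $\min_{1\le n<(m+1)^2}\phi(n)$, a finite set of one- and two-digit $n$. The sign of the leading coefficient $a$ governs the convexity of $f$ and hence where this minimum sits: when $a\le0$ the function $f$ is concave, forcing $C_\epsilon+1\ge(\epsilon+1)^\alpha$, so every $\phi(n)\ge1=\phi(m)$ and the infimum is $1$; when $a>0$ the competing values migrate to the two-digit numbers with leading digit $1$, i.e. $n=m+\epsilon+1=[1\,\epsilon]_{m+1}$ with $C_{m+\epsilon+1}=f(1)(p+1)+f(\epsilon)$, yielding $\min\{1,\min_{\epsilon\in\Sigma_m}(C_{m+\epsilon+1}+1)/(m+\epsilon+2)^\alpha\}$. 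The only remaining work is the finite comparison ruling out the other one- and two-digit values, a direct computation with the explicit quadratic.
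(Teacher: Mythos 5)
Your superior argument and your two-sided identification $\inf_n C_n/n^\alpha=\inf_{n\ge 1}\phi(n)$ (via $\psi(N)\ge\phi(N-1)$ and $\psi(N_k)\to\phi(n)$ along $N_k=(n+1)(m+1)^k-1$) are correct, and the latter is cleaner than the paper's route through its Propositions on appended digits. But your reduction lemma — the crux that produces $\ell_0$ — contains a genuine error: the convexity estimate is in the wrong direction. For a convex function, $x^\alpha-(x-h)^\alpha\le\alpha h\,x^{\alpha-1}$ (mean value theorem with $\xi<x$), so the bracket $(m+1)^\alpha-\bigl((m+1)-\tfrac{m-i}{q+1}\bigr)^\alpha$ is bounded \emph{above}, not below, by $\alpha(m+1)^{\alpha-1}\tfrac{m-i}{q+1}$. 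The valid tangent-line lower bound carries the factor $\bigl((m+1)-\tfrac{m-i}{q+1}\bigr)^{\alpha-1}<(m+1)^{\alpha-1}$, and redoing your estimate with it yields a left side strictly below $\alpha(f(m)+1)^{L}/(m+1)^{L+1}$ at $L=\ell_0$, so the exact threshold in the theorem is not recovered (your own bookkeeping "matches" the paper's $\ell_0$ only because of the reversed inequality). The paper sidesteps this by comparing the child $[u\epsilon]$ not with the parent but with the sibling $[um]$, using $\phi([um])=\phi([u])$ exactly: the target becomes $P/Q^\alpha\ge (P+\Delta_f)/(Q+\Delta_x)^\alpha$ with $Q=(m+1)[u]_{m+1}+\epsilon+1$, where Bernoulli $\left(Q+\Delta_x\right)^\alpha-Q^\alpha\ge\alpha\Delta_x Q^{\alpha-1}$ applies at the \emph{smaller} endpoint in the correct direction, and the sufficient condition $\alpha P/Q\ge \Delta_f/\Delta_x$ then produces the stated $\ell_0$ exactly.

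The quadratic case has a second gap: your plan is to "verify directly from the displayed inequality that $\ell_0=2$", but that inequality generally fails at $\ell_0=2$ for quadratics satisfying \eqref{Cond:1}. For instance $f(x)=x^2$, $m=2$ gives $\alpha=\log 5/\log 3$, $\max_{\epsilon\ne m}\frac{f(m)-f(\epsilon)}{m-\epsilon}=3$, while $\alpha\frac{(f(m)+1)^2}{(m+1)^3}=\alpha\cdot\frac{25}{27}\approx 1.36$, so the criterion forces $\ell_0=4$ here even though the two-digit conclusion of the theorem is true. The paper does not obtain the quadratic case from the $\ell_0$-criterion at all: it proves the two-digit reduction separately (Lemma \ref{lem:Cn/na_geq2}), replacing the crude bound $C_{[u]_{m+1}}\ge (f(m)+1)^{|u|-1}$ by the sharper linear estimate $C_n/n\ge\frac{f(m)+m+1}{2m+1}$ of Proposition \ref{prop:C_n/nforngeq2} and running a case analysis over the finitely many admissible $(a,b,m)$ with $1<\alpha<2$, and then computes the one- and two-digit minimum via Lemmas \ref{lem:min_for_sencond_whenfirstdigit>1} and \ref{lem:min_for_firstdigit}. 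Your concavity remark for $a\le 0$ handles only the one-digit values $\phi(\epsilon)$; ruling out the two-digit candidates (and, for $a>0$, reducing them to leading digit $1$) is precisely the content of those lemmas and does not follow from the finite check you describe, since without a valid reduction lemma at length $2$ the infimum has not yet been confined to $n<(m+1)^2$.
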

\begin{remark}
It should be noticed that $\ell_0$ as in Theorem \ref{thm:boundofC_n/nalpha} is not optimal. The experiment suggests that $\ell_0\leq 2$ for every strictly increasing $f$ satisfying $f(0)=0$ and $f(m)=p$. So far we can not prove this. However, it holds for quadratic $f$ satisfying \eqref{Cond:1}. For the exact values of the superior and inferior with respect to this kind of base conversion function, see Proposition \ref{prop:exact_sup_of_quadratic} and Proposition \ref{prop:exact_inf_of_quadratic} respectively. 
\end{remark}
Similarly with \cite[Theorem 2]{CL22} and \cite[Theorem 1.2]{CY22}, we also have the following density property.
\begin{theorem}\label{thm:denseproperty}
The sequence $\{\frac{C_n}{n^{\alpha}}\}_{n\geq 1}$ is dense in the close interval $[\inf_{n\geq 1}\frac{C_n}{n^{\alpha}}, \sup_{n\geq 1} \frac{C_n}{n^{\alpha}} ]$.
\end{theorem}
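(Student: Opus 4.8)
The plan is to replace the sequence by a two–parameter family on which the base conversion has an exact self–similarity, and then to run a greedy ``nested cylinder'' argument. Let $F\colon[0,1]\to[0,1]$ be the increasing limit function attached to $f$, i.e. $F\big(\sum_{i\ge1}\epsilon_i(m+1)^{-i}\big)=\sum_{i\ge1}f(\epsilon_i)(p+1)^{-i}$, so that $F(0)=0$, $F(1)=1$; for $N\ge0$ and $y\in[0,1]$ set
\[ G_N(y):=\frac{C_N+F(y)}{(N+y)^{\alpha}} . \]
With $a_n:=C_n/n^{\alpha}$ one has $a_N=G_N(0)$, and appending to $N$ the first $\ell$ base–$(m+1)$ digits of $y$ yields integers $n$ with $a_n\to G_N(y)$; hence each $G_N(y)$ is a subsequential limit of $\{a_n\}$. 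Using $C_{(m+1)N+\epsilon}=(p+1)C_N+f(\epsilon)$ and $(m+1)^{\alpha}=p+1$ one gets the scaling relation
\[ G_N\!\left(\tfrac{\epsilon+z}{m+1}\right)=G_{(m+1)N+\epsilon}(z)\qquad(\epsilon\in\Sigma_m,\ z\in[0,1]), \]
so that, writing $J_N:=\overline{\{G_N(y):y\in[0,1]\}}$, the partition $[0,1]=\bigcup_{\epsilon}[\tfrac{\epsilon}{m+1},\tfrac{\epsilon+1}{m+1}]$ gives the exact decomposition $J_N=\bigcup_{\epsilon=0}^{m}J_{(m+1)N+\epsilon}$ (a finite union of closed sets). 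Finally, since $0\le F\le 1$, the mean value theorem gives $\operatorname{diam}J_N=O(1/N)$, so the diameters of cylinders of depth $k$ tend to $0$.

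These facts already yield a reduction. First, because every $N\ge1$ descends from its leading digit, $\bigcup_{N\ge1}J_N=\bigcup_{\epsilon\in\Sigma_m^{+}}J_{\epsilon}$; as $\{a_n\}$ is contained in this set and every point of it is a subsequential limit lying in $[L,U]$ (with $L:=\inf_n a_n$, $U:=\sup_n a_n$), taking infimum and supremum gives $\min\bigcup_{\epsilon}J_{\epsilon}=L$ and $\max\bigcup_{\epsilon}J_{\epsilon}=U$. Second, the greedy principle: if $t\in\bigcup_{\epsilon}J_{\epsilon}$, choose $N_0\in\Sigma_m^{+}$ with $t\in J_{N_0}$ and, using the exact decomposition, inductively pick $\epsilon_j$ with $t\in J_{(m+1)N_j+\epsilon_j}=:J_{N_{j+1}}$; since $a_{N_j}=G_{N_j}(0)\in J_{N_j}\ni t$ and $\operatorname{diam}J_{N_j}\to0$, we get $a_{N_j}\to t$, so $t$ is a subsequential limit. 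Consequently the theorem follows once one shows $\bigcup_{\epsilon\in\Sigma_m^{+}}J_{\epsilon}=[L,U]$, i.e. that this union leaves no gaps.

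The engine for the no–gap statement is an overlap between adjacent cylinders: for $0\le\epsilon<m$,
\[ G_{(m+1)N+\epsilon}(1)=\frac{C_{(m+1)N+\epsilon}+1}{((m+1)N+\epsilon+1)^{\alpha}}\le\frac{C_{(m+1)N+\epsilon+1}}{((m+1)N+\epsilon+1)^{\alpha}}=G_{(m+1)N+\epsilon+1}(0), \]
which holds because $C_{(m+1)N+\epsilon+1}-C_{(m+1)N+\epsilon}=f(\epsilon+1)-f(\epsilon)\ge1$, $f$ being strictly increasing and integer–valued. Thus at each internal breakpoint of $F$ the right–hand value of one cylinder lies above the left–hand limit of the next, so that the jump of $F$ there is bridged by the neighbouring piece; here the normalisations $f(0)=0$ and $f(m)=p$ are what make the extreme pieces reach down to $L$ and up to $U$. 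Promoting this one–sided adjacency bound to a genuine filling of $[L,U]$ amounts to checking that the convex hulls $[\min J_N,\max J_N]$, ordered along the line, chain together into a single interval; for this it suffices that within each cylinder the maximum of $G_N$ is attained at $z=0$ and the minimum at $z=1$, and that the sibling values $G_{(m+1)N+\epsilon}(0)$ decrease in $\epsilon$, for then consecutive hulls overlap and their union is an interval.

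I expect the last step to be the main obstacle. Because $F$ is discontinuous exactly at the (dense) $(m+1)$–adic rationals, $y\mapsto G_N(y)$ has a dense set of upward jumps, so its range is genuinely riddled with holes and no continuity or intermediate–value argument is available; the entire content is that these holes are filled by values coming from distant parts of the family. The monotonicity facts needed to close the chain—maximum at $z=0$, minimum at $z=1$, and siblings decreasing—hold automatically once $C_N$ dominates, i.e. for all sufficiently deep cylinders, but may fail for the finitely many shallow ones, which must be treated by direct computation. This is precisely the role played by the threshold $\ell_0$ in Theorem \ref{thm:boundofC_n/nalpha}, and the verification parallels the arguments of \cite{CL22} and \cite{CY22}.
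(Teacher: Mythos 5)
Your structural setup is correct and, in fact, your overlap inequality $C_{n+1}\ge C_n+1$ (i.e.\ $G_n(1)\le G_{n+1}(0)$) is the same bridging fact that powers the paper's argument. But the last step, which you yourself flag as the main obstacle, is a genuine gap, and the patch you propose cannot close it. Your three monotonicity facts (maximum at $z=0$, minimum at $z=1$, siblings decreasing; these are Propositions \ref{prop:addepsilon} and \ref{prop:monotoneforlargen} together with \eqref{ineqn:inf_monotone}, valid only for deep cylinders) chain the hulls of \emph{siblings of a common parent}: they yield $\bigcup_{\epsilon}\operatorname{hull}(J_{(m+1)N+\epsilon})=\operatorname{hull}(J_N)$ for deep $N$, so the union of hulls at level $k$ is invariant in $k$. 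They say nothing about adjacent cylinders whose nearest common ancestor is shallow, and there the needed overlap fails \emph{at every depth}, not just at finitely many shallow cylinders. Concretely, choose $r$ with $\Delta f(r)\ge 2$ (such $r$ exists since $p>m$) and any word $w$ with positive leading digit ending in $r-1$. From $C_{[wm^j]_{m+1}}+1=(f(m)+1)^j\bigl(C_{[w]_{m+1}}+1\bigr)$ and $[wm^j]_{m+1}+1=(m+1)^j\bigl([w]_{m+1}+1\bigr)$ one sees that $\operatorname{hull}(J_{[wm^j]_{m+1}})$ shrinks, as $j\to\infty$, to the single point $\bigl(C_{[w]_{m+1}}+1\bigr)/\bigl([w]_{m+1}+1\bigr)^{\alpha}$, while the hull of the next cylinder $J_{[(w+1)0^j]_{m+1}}$ shrinks to $C_{[w]_{m+1}+1}/\bigl([w]_{m+1}+1\bigr)^{\alpha}$; the gap between these consecutive hulls is essentially $(\Delta f(r)-1)/\bigl([w]_{m+1}+1\bigr)^{\alpha}$, bounded away from $0$ uniformly in the depth $j$. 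So no depth threshold makes consecutive hulls overlap; the holes must be filled by cylinders far away in the level (as you suspected), and ``direct computation for the finitely many shallow cylinders'' is neither the right set of exceptions nor a uniform argument over the infinitely many admissible $f$ (the thresholds $k_0,k_1$ depend on $f$; also $\ell_0$ in Theorem \ref{thm:boundofC_n/nalpha} concerns the inferior, not this step). Your reduction ``it suffices that $\bigcup_{\epsilon\in\Sigma_m^{+}}J_\epsilon=[L,U]$'' is fine; that equality, which is the entire content of the theorem, remains unproved.

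The paper closes exactly this hole by abandoning interval chaining in favour of a greedy digit construction aimed at a single target $\gamma$. For $\gamma$ in the open interval and outside the countable set $E$ of tail-$m$ limits $\lim_k\tilde C_{[um^k]_{m+1}}$ (each element of $E$ being already a subsequential limit), one pads the starting digit with zeros until Propositions \ref{prop:addepsilon} and \ref{prop:monotoneforlargen} apply, then recursively appends the \emph{largest} digit $\epsilon$ with $\tilde C_{(m+1)n_k+\epsilon}\ge\gamma$; the digit $0$ always qualifies because $\tilde C_{(m+1)n}=\tilde C_n$. By Proposition \ref{prop:addepsilon} the values $\tilde C_{n_k}$ are nonincreasing, hence converge to a limit $\ge\gamma$; since $\gamma\notin E$ forces infinitely many chosen digits to be $<m$, maximality gives $\tilde C_{(m+1)n_k+m}<\gamma$ infinitely often, and $\tilde C_{(m+1)n_k+m}=\bigl((f(m)+1)C_{n_k}+f(m)\bigr)/\bigl((m+1)n_k+m\bigr)^{\alpha}$ has the same limit as $\tilde C_{n_k}$, forcing the limit to equal $\gamma$. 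The greedy path thus navigates across precisely those cylinder boundaries where your consecutive hulls fail to meet; your inequality $C_{n+1}\ge C_n+1$ is, in effect, what rules out the greedy path being trapped in an eventually-all-$m$ tail strictly above $\gamma$ (if the tail $[um^t]_{m+1}$ qualified forever, the next digit would already have qualified at the step creating $u$). To complete your programme you would need to import this greedy selection, or some other global covering argument, to establish the no-gap statement; the hull machinery alone does not do it.
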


Recall that the Hausdorff dimension of $\mathfrak{C}$  is equal to $1/\alpha$, and the density of intervals of the form $[0,x]$ on the interval $(0,1]$ with respect to $\mu_{\mathfrak{C}}$  is defined by $d(x):=\frac{\mu_{\mathfrak{C}}([0,x])}{x^{1/\alpha}}$. Note that the lower and upper $1/\alpha$-density of $\mu_{\mathfrak{C}}$ at $0$ are defined by
\[ \Theta_{*}^{1/\alpha}(\mu_{\mathfrak{C}},0):=\liminf_{x\to0^+}d(x) \text{ and } \Theta^{*1/\alpha}(\mu_{\mathfrak{C}},0):=\limsup_{x\to 0^+}d(x) \]
respectively. For more details of pointwise density of self-similar measure, see \cite{FHW00} and \cite{KLY21}. Let \[\Lambda_0:=\{\ell:\text{ there exists a positive and vanishing sequence } \{x_n\}_{n\geq 1} \text{ such that } \lim_{n\to\infty}d(x_n) = \ell \}.\]
Now we have the following corollary about pointwise density of self-similar measure $\mu_{\mathfrak{C}}$ at $0$.
\begin{corollary}\label{cor:density_of_selfsimilarmeasure} We have that
\[ \Lambda_0 = \left[\inf_{n\geq 1}\frac{n}{{C_n}^{1/\alpha}},  \sup_{n\geq 1}\frac{n}{{C_n}^{1/\alpha}}\right]. \]
As a consequence,
\[ \Theta_{*}^{s}(\mu_{\mathfrak{C}},0) =\left(\sup_{n\geq 1} \frac{C_n}{n^{\alpha}}\right)^{-1/\alpha} \text{ and }\Theta^{*s}(\mu_{\mathfrak{C}},0)=\left(\inf_{n\geq 1} \frac{C_n}{n^{\alpha}}\right)^{-1/\alpha}.\]
\end{corollary}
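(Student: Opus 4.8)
The plan is to determine the cluster set $\Lambda_0$ by reducing it to the \emph{range} of the density ratio $d(x)=\mu_{\mathfrak C}([0,x])/x^{1/\alpha}$ on the whole interval $(0,1]$, and then to trap that range between the left-endpoint values $n/C_n^{1/\alpha}$ that Theorem \ref{thm:denseproperty} makes dense. First I would record the basic identity linking $\mu_{\mathfrak C}$ to the Cantor-integers. For a length-$\ell$ word $\omega_1\cdots\omega_\ell$ over $\Sigma_m$ with $n=[\omega_1\cdots\omega_\ell]_{m+1}$, the cylinder $S_{\omega_1}\circ\cdots\circ S_{\omega_\ell}([0,1])$ has left endpoint $\sum_{j=1}^{\ell} f(\omega_j)(p+1)^{-j}=C_n/(p+1)^\ell$ and $\mu_{\mathfrak C}$-measure $(m+1)^{-\ell}$. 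Since $f$ is strictly increasing, the IFS satisfies the open set condition, the level-$\ell$ cylinders are ordered by their index and essentially disjoint, and $\mu_{\mathfrak C}$ is non-atomic; hence, writing $F(x):=\mu_{\mathfrak C}([0,x])$,
\[ F\!\left(\frac{C_n}{(p+1)^\ell}\right)=\frac{n}{(m+1)^\ell},\qquad 0\le n<(m+1)^\ell. \]
Because $(p+1)^{1/\alpha}=m+1$, this yields the crucial evaluation $d(C_n/(p+1)^\ell)=n/C_n^{1/\alpha}$ at every such left endpoint, so $A:=\{n/C_n^{1/\alpha}:n\ge1\}\subseteq\operatorname{Range}(d)$. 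Writing $J:=\left[\inf_{n\ge1} n/C_n^{1/\alpha},\ \sup_{n\ge1} n/C_n^{1/\alpha}\right]$ and applying the decreasing homeomorphism $t\mapsto t^{-1/\alpha}$ to Theorem \ref{thm:denseproperty}, the set $A$ is dense in $J$, i.e. $\overline{A}=J$.

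Second, I would exploit self-similarity at the origin. Since $f(0)=0$, for $x\in(0,1/(p+1)]$ the segment $[0,x]$ meets only the cylinder $S_0([0,1])$ up to a $\mu_{\mathfrak C}$-null set, so $F(x)=\tfrac1{m+1}F((p+1)x)$; combined with $(m+1)x^{1/\alpha}=((p+1)x)^{1/\alpha}$ this gives the scaling invariance $d(x)=d((p+1)x)$. Iterating, $d((p+1)^{-k}y)=d(y)$ for every $y\in(0,1]$ and $k\ge1$, and taking $x_k=(p+1)^{-k}y\to0^+$ shows $d(y)\in\Lambda_0$; hence $\operatorname{Range}(d)\subseteq\Lambda_0$. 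As $\Lambda_0$ is closed (a diagonal argument over vanishing sequences), and conversely every cluster value is a limit of values of $d$, this produces the clean reduction $\Lambda_0=\overline{\operatorname{Range}(d)}$.

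Third, the heart of the argument is to show $\operatorname{Range}(d)\subseteq J$. For $x\in\mathfrak C$ I would take the nested cylinders containing $x$ with left endpoints $a_\ell=C_{n_\ell}/(p+1)^\ell\to x$; continuity of $F$ and $x>0$ give $d(x)=\lim_\ell d(a_\ell)=\lim_\ell n_\ell/C_{n_\ell}^{1/\alpha}\in\overline{A}=J$. For $x$ in a complementary gap, say the gap immediately to the right of the level-$\ell$ cylinder of index $n$, $F$ is constant there, so $d$ is decreasing on the gap; its two one-sided limits are the values of $d$ at the gap endpoints, both of which lie in $\mathfrak C$ (the left endpoint is $S_{\omega(n)}(1)$ and the right is $S_{\omega(n+1)}(0)$, with $0,1\in\mathfrak C$), hence in $J$ by the previous case. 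Since $J$ is an interval, $d(x)\in J$ as well. Combining $\operatorname{Range}(d)\subseteq J$ with $\overline{A}=J\subseteq\overline{\operatorname{Range}(d)}$ gives $\Lambda_0=\overline{\operatorname{Range}(d)}=J$, the first assertion. Finally, $J$ being compact, $\liminf_{x\to0^+}d(x)=\min\Lambda_0=\inf A$ and $\limsup_{x\to0^+}d(x)=\max\Lambda_0=\sup A$; rewriting $\inf A=(\sup_{n}C_n/n^\alpha)^{-1/\alpha}$ and $\sup A=(\inf_{n}C_n/n^\alpha)^{-1/\alpha}$ gives the stated values of $\Theta_{*}^{s}(\mu_{\mathfrak C},0)$ and $\Theta^{*s}(\mu_{\mathfrak C},0)$.

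I expect the main obstacle to be the third step, namely controlling $d$ uniformly over all of $(0,1]$ and in particular ruling out that $d$ overshoots $\sup A$ or undershoots $\inf A$ on the complementary gaps. The monotonicity-plus-endpoints device settles it cleanly, but this is precisely where the geometry of the construction is genuinely used: the gap endpoints belonging to $\mathfrak C$, the non-atomicity of $\mu_{\mathfrak C}$, and the open set condition coming from the strict monotonicity of $f$.
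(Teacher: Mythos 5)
Your proposal is correct, and its overall skeleton coincides with the paper's: both arguments reduce $\Lambda_0$ to the closure of the range of $d$ on $(0,1]$ via the scaling identity $d\big((f(m)+1)^{-k}x\big)=d(x)$ (which uses $f(0)=0$), and both identify that range with $J:=\big[\inf_{n\geq 1} n/C_n^{1/\alpha},\,\sup_{n\geq 1} n/C_n^{1/\alpha}\big]$ by feeding Theorem \ref{thm:denseproperty} through the decreasing homeomorphism $t\mapsto t^{-1/\alpha}$. Where you genuinely diverge is in how the range of $d$ is pinned down. The paper outsources this: it invokes the blow-up principle \cite[Lemma 2.1]{AS99} to get $\{d(x):x\in(0,1]\}=\{d(x):x\in[1/(f(m)+1),1]\}$, uses continuity of $d$ to locate the extrema in $\mathfrak{C}\cap[1/(f(m)+1),1]$, and then cites the argument of \eqref{eqn:linearcase_for_self-similar_measure} (or \cite[Corollary 1.3]{CY22}) for the key identity $\{d(x):x\in\mathfrak{C}\cap[1/(f(m)+1),1]\}=J$. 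You instead prove everything from first principles: the exact evaluation $d\big(C_n/(p+1)^{\ell}\big)=n/C_n^{1/\alpha}$ at cylinder left endpoints (which correctly uses non-atomicity of $\mu_{\mathfrak{C}}$ and the ordering of cylinders forced by strict monotonicity of $f$), approximation by nested cylinders for general $x\in\mathfrak{C}$ via continuity of $F$, and the observation that $F$ is constant on each complementary gap, so $d(x)=F(u)x^{-1/\alpha}$ is decreasing there and trapped between its values at the gap endpoints, which lie in $\mathfrak{C}$ and hence in $J$. This last device is exactly what rules out $d$ overshooting $J$ off the Cantor set, a point the paper leaves implicit inside the citation; note also that your gap endpoints produce the values $(n+1)/(C_n+1)^{1/\alpha}$, consistent with the form $\min (C_n+1)/(n+1)^{\alpha}$ of the inferior in Theorem \ref{thm:boundofC_n/nalpha}, a reassuring cross-check. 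In short: the paper's route buys brevity by citation; yours buys a self-contained and slightly more elementary proof whose endgame ($\Lambda_0=\overline{\operatorname{Range}(d)}$ via closedness of $\Lambda_0$ and the scaling invariance) matches the paper's final two displays verbatim.
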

\begin{remark}
Kong, Li and Yao in \cite{KLY21} considered the homogeneous Cantor set generated by the iterated function system
\[ \left\{ S_i(x)=\rho x+\frac{i(1-\rho)}{m}:i=0,\dots,m\right\},\]
where $m\geq 1$ and $\rho\in(0,1/(m+1)^2]$. If $\alpha \geq 2$, then we can take $\rho:=\frac{1}{f(m)+1}$. This implies that $f(i)=\frac{f(m)}{m}i$, which is the linear case when $m\mid f(m)$.
\end{remark}
Arithmetic functions related to number representation systems exhibit various periodicity phenomenon. For example, let $\{r(n)\}_{n\geq 0}$ be the Rudin-Shaipro sequence. Brillhart, Erd\'{o}s and Morton in \cite{BEM83} considered the properties of the partial sum $s(x):=\sum_{k=1}^{\lfloor x\rfloor}r(k)$ and $t(x):=\sum_{k=1}^{\lfloor x\rfloor}(-1)^kr(k)$, and introduced the limit functions
  \[ \tau(x):=\lim_{k\to\infty}\frac{s(\lfloor4^kx\rfloor)}{\sqrt{4^kx}} \text{ and } \mu(x):=\lim_{k\to\infty}\frac{t(\lfloor4^kx\rfloor)}{\sqrt{4^kx}},\]
which are defined for $x>0$. Note that $\tau(4x)=\tau(x)$ and  $\mu(4x)=\mu(x)$. They showed that $\tau(x)$ and $\mu(x)$ are continuous at every $x>0$, but are non-differentiable almost everywhere, and they also estimated the decay of coefficients of logarithmic Fourier series of $\tau(x)$ and $\mu(x)$. For the Cantor-integers sequence, write $C_{x} := C_{\lfloor x \rfloor}$ for every $x\in(0,\infty)$. Define a function $\lambda: (0,\infty) \to (0,\infty)$ defined by
\[  \lambda(x):= \lim_{k\to\infty} \frac{C_{(m+1)^kx}}{((m+1)^kx)^{\alpha}}.\]
It follows from \eqref{eqn:C_m+1((m+1)x)} that $\lambda(x)$ has a logarithmic Fourier series expansion. For the other analytic properties of $\lambda(x)$, see Proposition \ref{prop:cts}, Proposition \ref{prop:differentiability} and Proposition \ref{prop:fourierseries}.

Let the cumulative and logarithmic distribution function of the sequence $\{\frac{C_n}{n^{\alpha}}\}_{n\geq 1}$ be defined as in Section 7 and Section 8 in \cite{BEM83} respectively.  Using Theorem \ref{thm:denseproperty} and applying similar arguments of Theorem 1.4 and Theorem 1.5 in \cite{CY22}, we have the following two corollaries.
\begin{corollary}\label{cor:cumulative_dist}
The cumulative distribution function of the sequence $\{\frac{C_n}{n^{\alpha}}\}_{n\geq 1}$ does not exists for any point of $[\inf_{n\geq 1}\frac{C_n}{n^{\alpha}},\sup_{n\geq 1}\frac{C_n}{n^{\alpha}}]$.
\end{corollary}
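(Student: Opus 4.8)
The plan is to read the cumulative distribution function, in the sense of Section 7 of \cite{BEM83}, as the limit
$$F(y)=\lim_{N\to\infty}A_N(y),\qquad A_N(y):=\frac1N\,\#\Big\{1\le n\le N:\frac{C_n}{n^{\alpha}}\le y\Big\},$$
so that proving Corollary \ref{cor:cumulative_dist} amounts to showing that $A_N(y)$ \emph{fails to converge} for every non-extremal $y$ in the interval. Following the argument of Theorem 1.4 in \cite{CY22}, I would not compute any single limit but instead expose a log-periodic oscillation in $N$ and extract two subsequences along which $A_N(y)$ tends to two different values.

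The first step is to replace $C_n/n^{\alpha}$ by the limit function $\lambda$. The base-conversion definition yields the recursion $C_{(m+1)n+i}=(p+1)C_n+f(i)$ for $i\in\Sigma_m$, which combined with $(m+1)^{\alpha}=p+1$ gives the scaling identity $\lambda((m+1)x)=\lambda(x)$; by Proposition \ref{prop:cts}, $\lambda$ is continuous, and one checks that $C_n/n^{\alpha}=\lambda(n)+o(1)$ uniformly as $n\to\infty$. Hence $A_N(y)$ differs by $o(1)$ from the proportion of $n\le N$ with $\lambda(n)\le y$. Since $\lambda$ is log-periodic with period $\log(m+1)$, this proportion is controlled by how the integers $n\le N$ distribute in the logarithmic scale $\{\log_{m+1}n\}$; that distribution is geometrically weighted toward the top block $[(m+1)^{k},N]$ and, in the limit, depends only on the fractional part $\theta:=\{\log_{m+1}N\}$. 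I would therefore prove that, as $N\to\infty$ with $\theta$ fixed,
$$A_N(y)\longrightarrow \Phi_y(\theta):=\int_{0}^{1}\mathbf{1}\!\left[\lambda\big((m+1)^{t}\big)\le y\right]d\omega_{\theta}(t)$$
for an explicit log-periodic weight $\omega_{\theta}$. This $\Phi_y$ is exactly the logarithmic distribution function whose existence is asserted in the companion corollary, and averaging $\Phi_y(\theta)$ over $\theta$ recovers it in $\theta$-free form, which is why the logarithmic version survives while the cumulative one need not.

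The crux, and the step I expect to be the main obstacle, is to show that $\Phi_y$ is \emph{non-constant} in $\theta$ for every $y$ in the interior of $\big[\inf_{n\ge1}C_n/n^{\alpha},\ \sup_{n\ge1}C_n/n^{\alpha}\big]$. Here Theorem \ref{thm:denseproperty} is decisive: the values of $\lambda$ fill the whole interval without gaps, so for interior $y$ the level set $\{\lambda\le y\}$ has nonempty boundary within each period, and shifting the dominant block, i.e.\ varying $\theta$, moves a positive proportion of indices across the threshold $y$. The difficulty is purely quantitative, namely to rule out an accidental cancellation in which the weighted measure of $\{\lambda\le y\}$ happens to stay constant as $\theta$ sweeps a period; this is precisely where the continuity together with the strict variation of $\lambda$ (and not merely the density of the sequence) must enter. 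Once two values $\theta_1\neq\theta_2$ with $\Phi_y(\theta_1)\neq\Phi_y(\theta_2)$ are produced, taking $N_k'=\lceil(m+1)^{k+\theta_1}\rceil$ and $N_k''=\lceil(m+1)^{k+\theta_2}\rceil$ gives $A_{N_k'}(y)\to\Phi_y(\theta_1)$ and $A_{N_k''}(y)\to\Phi_y(\theta_2)$, so $\lim_{N}A_N(y)$ cannot exist.

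Finally, the phenomenon is genuinely interior. At the right endpoint $y=\sup_{n\ge1}C_n/n^{\alpha}$ one has $A_N(y)\equiv1$, and at the left endpoint $y=\inf_{n\ge1}C_n/n^{\alpha}$, which is not attained by any term, $A_N(y)\equiv0$; both limits trivially exist. The content of Corollary \ref{cor:cumulative_dist} is therefore the failure of convergence throughout the open interval, and the oscillation established above shows that no cumulative distribution function exists for the sequence.
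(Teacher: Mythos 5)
Your strategy---reduce $A_N(y)$ along $N\sim(m+1)^{k+\theta}$ to block averages of $\mathbf{1}\!\left[\lambda\le y\right]$ and produce two fractional parts $\theta_1\neq\theta_2$ with different limits---is the route the paper itself points to (it gives no self-contained proof, deferring to the arguments of Theorem 1.4 of \cite{CY22}, which run in exactly this log-periodic fashion). The genuine gap is that you leave unproven the one step that carries the entire content of the corollary: the non-constancy of $\Phi_y(\theta)$, which you explicitly defer as ``the main obstacle'' to an unspecified quantitative argument about ruling out accidental cancellation. That step admits a short structural proof which your sketch should contain. Since $\lambda((m+1)x)=\lambda(x)$ and $\lambda(n)=C_n/n^\alpha$ exactly (not merely up to $o(1)$), every full block $[(m+1)^j,(m+1)^{j+1})$ contributes the same limiting fraction $\nu(y)=\frac1m\int_1^{m+1}\mathbf{1}\!\left[\lambda(x)\le y\right]\dd{x}$, and for $N\sim x_0(m+1)^k$ with $x_0\in[1,m+1]$ one gets
\begin{equation*}
A_N(y)\longrightarrow H(x_0):=\frac{1}{x_0}\left(\nu(y)+\int_1^{x_0}\mathbf{1}\!\left[\lambda(x)\le y\right]\dd{x}\right).
\end{equation*}
If $H$ were constant in $x_0$, differentiating forces $\mathbf{1}\!\left[\lambda(x)\le y\right]$ to be a.e.\ constant on $[1,m+1]$. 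For interior $y$, Theorem \ref{thm:denseproperty} supplies points with $\lambda<y$ and points with $\lambda>y$, and right-continuity from Proposition \ref{prop:cts} upgrades each to an interval of positive measure; so both $\{\lambda\le y\}$ and $\{\lambda>y\}$ have positive Lebesgue measure, and $H$ cannot be constant. No cancellation analysis is needed---this is the missing argument, and without it your proposal is a correct plan rather than a proof.

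Two further points. First, your intermediate claim $A_N(y)\to\Phi_y(\theta)$ is a Riemann-sum statement over the equally spaced points $n/(m+1)^j$, so it requires the discontinuity set of $\mathbf{1}\!\left[\lambda\le y\right]$ to be Lebesgue-null, i.e.\ $\{\lambda=y\}$ of measure zero; you never address this. It follows, e.g., from Proposition \ref{prop:differentiability}: at every normal point $D'(x_0)=0$, hence $\lambda'(x_0)=-\alpha\lambda(x_0)/x_0<0$, which is incompatible with a level set of positive measure. Relatedly, you invoke Proposition \ref{prop:cts} as saying ``$\lambda$ is continuous''---it is continuous only at $(m+1)$-irrationals and one-sided at rationals, which suffices here but should be stated correctly. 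Second, your endpoint analysis is sound: $A_N\equiv 1$ at $y=\sup_{n\ge1}C_n/n^\alpha$ and $A_N\to 0$ at the infimum (which, when not attained, gives $A_N\equiv 0$), so the limits exist there and the non-existence phenomenon is genuinely confined to the open interval; the corollary's closed-interval phrasing, inherited from \cite{CY22}, is accurate only on the interior, and your flagging of this is a fair criticism of the statement rather than a defect of your argument.
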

\begin{corollary}\label{cor:logcumulative_dist}
If $\gamma \in [\inf_{n\geq 1}\frac{C_n}{n^{\alpha}},\sup_{n\geq 1}\frac{C_n}{n^{\alpha}}]$, then the logarithmic distribution function of the sequence $\{\frac{C_n}{n^{\alpha}}\}_{n\geq 1}$ exists at $\gamma$, and has the value 
\[ L(\gamma)=\frac{1}{\log(m+1)}\int_{E_{\gamma}}\frac{1}{x} \dd{x},\]
where $E_{\gamma}:=\{x\in[1,m+1]:\lambda(x)\leq \gamma\}$.
\end{corollary}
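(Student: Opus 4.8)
The plan is to compute the logarithmic distribution function straight from its definition, which in the convention of \cite{BEM83} is
\[ L(\gamma)=\lim_{N\to\infty}\frac{1}{\log N}\sum_{\substack{1\le n\le N\\ C_n/n^{\alpha}\le\gamma}}\frac{1}{n}, \]
and to reduce it, scale by scale, to Riemann sums for $\lambda$. The first and crucial observation is that at the relevant sample points the sequence \emph{equals} $\lambda$ exactly: iterating $C_{(m+1)N+d}=(p+1)C_N+f(d)$ with $f(0)=0$ gives $C_{(m+1)^{j}n}=(p+1)^{j}C_n$, and since $(m+1)^{\alpha}=p+1$ this is the integer form of \eqref{eqn:C_m+1((m+1)x)}. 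Hence for every integer $n$, writing $k:=\lfloor\log_{m+1}n\rfloor$ and $x:=n/(m+1)^{k}\in[1,m+1)$, the sequence defining $\lambda(x)$ is eventually constant and equal to $C_n/n^{\alpha}$, so $\lambda(x)=C_n/n^{\alpha}$ with no error term. This removes any approximation issue between the discrete sequence and the continuous (Proposition~\ref{prop:cts}) function $\lambda$.

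Next I would split $\{1,\dots,N\}$ into the blocks $[(m+1)^{k},(m+1)^{k+1})$ for $0\le k\le K:=\lfloor\log_{m+1}N\rfloor$. On the $k$-th block, setting $x_j:=j/(m+1)^{k}$ and using the exact identity, the inner logarithmic sum becomes a Riemann sum of mesh $(m+1)^{-k}$,
\[ \sum_{(m+1)^{k}\le n<(m+1)^{k+1}}\frac{1}{n}\,\mathbf{1}\!\left[\tfrac{C_n}{n^{\alpha}}\le\gamma\right]=\sum_{j=(m+1)^{k}}^{(m+1)^{k+1}-1}\frac{1}{(m+1)^{k}}\,\frac{1}{x_j}\,\mathbf{1}\!\left[\lambda(x_j)\le\gamma\right], \]
whose limit as $k\to\infty$ is $\int_{E_\gamma}\frac{1}{x}\,\dd{x}$, the same value for every block. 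Since there are $K+1\sim\log N/\log(m+1)$ full blocks, the top partial block contributes only $O(1)$ to the whole sum and is annihilated on dividing by $\log N$, and a Cesàro argument upgrades the block-wise limit to the limit of the normalized total. Collecting these, $L(\gamma)=\tfrac{1}{\log(m+1)}\int_{E_\gamma}\frac{1}{x}\,\dd{x}$, exactly as in the treatment of Theorem~1.4 and Theorem~1.5 of \cite{CY22}.

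The main obstacle is the convergence of the per-block Riemann sums to $\int_{E_\gamma}\frac{1}{x}\,\dd{x}$, because the integrand $x^{-1}\mathbf{1}_{E_\gamma}$ is discontinuous along the level set $\{x\in[1,m+1]:\lambda(x)=\gamma\}$ and the sample points $x_j$ sit on a uniform grid, so by Lebesgue's criterion the sums converge to the integral precisely when that level set is Lebesgue-null; a uniform grid can otherwise overcount a fat flat part (as for the indicator of the rationals). Equivalently, I must show the pushforward measure $\lambda_{\ast}\bigl(\tfrac{1}{\log(m+1)}\,\tfrac{\dd{x}}{x}\bigr)$ has no atom at any $\gamma$, which is what guarantees that $L$ exists, and takes the stated value, at \emph{every} point of $[\inf_{n\ge1}C_n/n^{\alpha},\sup_{n\ge1}C_n/n^{\alpha}]$. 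I expect this atomlessness to be the real work; I would extract it from the non-degeneracy of $\lambda$ recorded in Proposition~\ref{prop:differentiability} and the decay of its logarithmic Fourier coefficients in Proposition~\ref{prop:fourierseries}, which together forbid $\lambda$ from being constant on a set of positive measure, while the denseness of Theorem~\ref{thm:denseproperty} identifies the range of $\lambda$ with the full interval.
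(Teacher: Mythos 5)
Your overall skeleton is exactly the reduction the paper delegates to \cite{CY22} (and ultimately to Section 8 of \cite{BEM83}): the exact identity $\lambda\bigl(j/(m+1)^k\bigr)=C_j/j^{\alpha}$ at grid points, the decomposition of $\{1,\dots,N\}$ into blocks $[(m+1)^k,(m+1)^{k+1})$, the reinterpretation of each block's logarithmic sum as a mesh-$(m+1)^{-k}$ Riemann sum for $x^{-1}\mathbf{1}_{E_\gamma}$, the $O(1)$ bound for the incomplete top block, and the Ces\`aro averaging are all correct. Since $\lambda$ is discontinuous only at $(m+1)$-rationals (Proposition \ref{prop:cts}), the discontinuity set of $x^{-1}\mathbf{1}_{E_\gamma}$ is contained in $\{x:\lambda(x)=\gamma\}$ together with a countable set, so you are right that everything hinges on the single claim you isolate: $\{x\in[1,m+1]:\lambda(x)=\gamma\}$ is Lebesgue-null for every $\gamma$ in the stated interval.

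The genuine gap is that your proposed source for this claim would fail. Neither Proposition \ref{prop:differentiability} nor Proposition \ref{prop:fourierseries} can ``forbid $\lambda$ from being constant on a set of positive measure'': the decay $c_n=\mathcal{O}(1/|n|)$ holds for any monotone (BV) function, in particular for the generalized Cantor function $g_{\mathfrak{C}}$ itself, which \emph{is} constant on a set of full measure; and a pointwise H\"older estimate at normal points is a regularity upper bound, perfectly compatible with flat pieces. Denseness (Theorem \ref{thm:denseproperty}) identifies the closure of the range but says nothing about level sets either. What does work — and is the mechanism behind the \cite{CY22}-style argument the paper invokes — is the explicit formula \eqref{eqn:C(x)}: if $\lambda(x)=\gamma$ with $x\in[1,m+1]$, then
\begin{equation*}
\gamma x^{\alpha}-C_{\lfloor x\rfloor}=\sum_{r\geq 1}f(\epsilon_r)(f(m)+1)^{-r}\in\mathfrak{C},
\end{equation*}
and $\mathfrak{C}$ has Hausdorff dimension $1/\alpha<1$, hence Lebesgue measure zero. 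Since $\gamma\geq\inf_{n\geq1}C_n/n^{\alpha}>0$, the map $x\mapsto\gamma x^{\alpha}-C_{\lfloor x\rfloor}$ is bi-Lipschitz on each of the finitely many intervals where $\lfloor x\rfloor$ is constant (its derivative is at least $\gamma\alpha>0$), so the level set is a finite union of bi-Lipschitz preimages of a null set, hence null. (Equivalently: $D$ of \eqref{def:D(x)} is monotone with image in the null set $\mathfrak{C}$, so $D'=0$ a.e., giving $\lambda'(x)=-\alpha\lambda(x)/x<0$ a.e., which is incompatible with a level set of positive measure at its density points.) With this substitution for your final step, your proof closes and coincides with the paper's intended argument.
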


Similarly with \cite[Section 5]{FGK94}, we have the Mellin-Perron formula for the summation function of Cantor-integers. Let $\zeta(s,a)$ and $\zeta(s)$ be the well-known Hurwitz zeta function and Riemann zeta function  respectively. i.e.,
\[ \zeta(s,a):=\sum_{n=0}^{\infty}\frac{1}{(n+a)^s}\text{ and }\zeta(s):=\zeta(s,1).\] For simplicity, write $\Delta f(r):=f(r)-f(r-1)$ for $r\in\Sigma_m^+$. 
\begin{theorem}\label{thm:perron-mellin_formula}
For the summation function $S(n):=\sum_{1\leq k<n} C_n$, we have
\begin{equation*}
S(n)= n^{\alpha+1}F(\log_{m+1}^n)-n^2\frac{f(m)}{f(m)-m}-n\frac{\sum_{r=1}^m f(r)}{f(m)(m+1)}-\frac{f(m)(m+1)-\sum_{r=1}^m f(r)}{f(m)(m+1)+m},
\end{equation*}
where $F(u)$ is the Fourier series
\begin{equation*}
F(u) =\frac{1}{(f(m)+1)\log(m+1)}\sum_{k\in\mathbb{Z}} \frac{\sum_{r=1}^m \Delta f(r) \zeta(\gamma_k,\frac{r}{m+1})-f(m)\zeta(\gamma_k) }{\gamma_k(\gamma_k+1)}e^{2\pi i ku},
\end{equation*}
with $\gamma_k:=\alpha+\frac{2\pi k\iu}{\log(m+1)}$.
\end{theorem}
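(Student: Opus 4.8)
The plan is to adapt the Mellin--Perron method of \cite[Section 5]{FGK94}, but to apply it to the difference sequence $\Delta C_n := C_n - C_{n-1}$ rather than to $\{C_n\}$ directly, because the base--conversion recurrence $C_{(m+1)q+i}=(f(m)+1)C_q+f(i)$ acts on the differences with no error term, whereas $\sum_n C_n n^{-s}$ does not factor cleanly. Reading the recurrence off at $n=(m+1)q+i$ gives $\Delta C_n=\Delta f(i)$ for $1\le i\le m$ and $\Delta C_{(m+1)q}=(f(m)+1)\Delta C_q-f(m)$. Splitting $\mathcal D(s):=\sum_{n\ge1}\Delta C_n\,n^{-s}$ according to the residue of $n$ modulo $m+1$ and using $\sum_{q\ge0}\bigl((m+1)q+r\bigr)^{-s}=(m+1)^{-s}\zeta(s,\tfrac{r}{m+1})$, the first case yields the Hurwitz--zeta contributions while the second reproduces a copy of $\mathcal D(s)$, so that
\[ \mathcal D(s)\Bigl(1-\tfrac{f(m)+1}{(m+1)^{s}}\Bigr)=\tfrac{1}{(m+1)^{s}}\Bigl(\sum_{r=1}^{m}\Delta f(r)\,\zeta(s,\tfrac{r}{m+1})-f(m)\,\zeta(s)\Bigr). \]
This continues $\mathcal D$ meromorphically, its only poles being the zeros $\gamma_k=\alpha+\tfrac{2\pi\iu k}{\log(m+1)}$ of $1-(f(m)+1)(m+1)^{-s}$ (recall $(m+1)^{\alpha}=f(m)+1$); the would--be pole at $s=1$ is removable, since $\sum_{r=1}^m\Delta f(r)=f(m)$ cancels the residue of $f(m)\zeta(s)$ against those of the $\zeta(s,\tfrac{r}{m+1})$.

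Next I would use that $C_k=\sum_{j\le k}\Delta C_j$, so $S(n)=\sum_{j<n}(n-j)\Delta C_j$ is a first--order Cesàro sum of $\{\Delta C_j\}$; this is exactly what the order--one Mellin--Perron formula evaluates, giving for $c>\alpha$
\[ S(n)=\frac{1}{2\pi\iu}\int_{c-\iu\infty}^{\,c+\iu\infty}\mathcal D(s)\,\frac{n^{s+1}}{s(s+1)}\,\dd{s}. \]
The growth bound $C_n=O(n^{\alpha})$ secures absolute convergence of $\mathcal D(s)$ for $\Re s>\alpha$ and legitimizes this representation. I would then push the line of integration to the left and collect residues. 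At each $\gamma_k$, using that the denominator has derivative $\log(m+1)$ there, one gets $\operatorname{Res}_{s=\gamma_k}\mathcal D(s)=\tfrac{1}{(f(m)+1)\log(m+1)}\bigl(\sum_{r}\Delta f(r)\zeta(\gamma_k,\tfrac{r}{m+1})-f(m)\zeta(\gamma_k)\bigr)$; multiplying by $\tfrac{n^{\gamma_k+1}}{\gamma_k(\gamma_k+1)}$ and writing $n^{\gamma_k+1}=n^{\alpha+1}e^{2\pi\iu k\log_{m+1}n}$ reproduces precisely the term $n^{\alpha+1}F(\log_{m+1}n)$ with the stated Fourier series and prefactor. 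The pole of the kernel at $s=0$ contributes $\mathcal D(0)\,n$, and a short computation from the functional equation with $\zeta(0,a)=\tfrac12-a$ and the Abel--summation identity $\sum_{r}r\,\Delta f(r)=(m+1)f(m)-\sum_{r}f(r)$ gives $\mathcal D(0)=-\tfrac{\sum_{r}f(r)}{f(m)(m+1)}$, matching the linear term.

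The main obstacle is the justification of the contour shift together with the extraction of the remaining polynomial part. On each horizontal segment and on the far--left vertical line one must control $\mathcal D(s)$ against the kernel; since $\zeta(\sigma+\iu t,\cdot)$ grows like a positive power of $\abs{t}$ as $\sigma\to-\infty$ while the kernel decays only like $\abs{t}^{-2}$, the tail integral ceases to converge once the line passes a bounded depth, so a naive shift to $-\infty$ is not available and the residue series must be summed with care. The genuinely delicate point is the $n^{2}$ coefficient $-\tfrac{f(m)}{f(m)-m}$ together with the constant term: because the Hurwitz--zeta singularity at $s=1$ is removable in $\mathcal D$, these cannot be read off as ordinary residues of $\mathcal D(s)\tfrac{n^{s+1}}{s(s+1)}$, and obtaining them requires a more refined treatment of the shifted integral — for instance a higher--order Mellin--Perron kernel $\tfrac{1}{s(s+1)(s+2)}$ to gain enough vertical decay to cross $s=1,0,-1$ legitimately, followed by reconciliation of the extra kernel factor and a matching of the residues at $s=-1$ against $\zeta(-1,a)=-\tfrac12 B_2(a)$ and $\zeta(-1)=-\tfrac1{12}$. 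Verifying the exact values of these lower--order coefficients, and confirming that the tail contributions do not perturb them, is where I expect the real work of the proof to lie.
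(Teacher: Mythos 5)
Your setup coincides step for step with the paper's own proof up to the point where you stop: the paper derives exactly your Dirichlet-series identity $\sum_{n\geq 1}\Delta C_n n^{-s}=\frac{\sum_{r=1}^m\Delta f(r)\zeta(s,\frac{r}{m+1})-f(m)\zeta(s)}{(f(m)+1)((m+1)^{s-\alpha}-1)}$ from the same recurrences, applies the same order-one Mellin--Perron kernel (Lemma \ref{lem:perron-mellin_Forumla}) to represent $S(n)/n$, shifts to $\Re s=-\frac14$, and collects the same residues at $s=\gamma_k$ (giving $n^{\alpha}F(\log_{m+1}n)$, with the same $\log(m+1)$ from the derivative of the denominator) and at the kernel pole $s=0$ (your value $\mathcal D(0)=-\frac{\sum_r f(r)}{f(m)(m+1)}$ is correct). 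The genuine gap is that you never produce the remaining terms of the asserted formula: the coefficient $-\frac{f(m)}{f(m)-m}$ of $n^2$ and the constant term are left as an unexecuted plan, and the plan itself is doubtful --- a kernel $\frac{1}{s(s+1)(s+2)}$ computes a second-order Ces\`aro mean rather than $S(n)$, so it would require a deconvolution you do not supply; also your worry about shifting to $-\infty$ is moot, since $|\zeta(-\frac14+\iu t,a)|\ll|t|^{3/4}$ against the kernel's $|t|^{-2}$ already gives absolute convergence at depth $-\frac14$. The paper instead finishes with the order-one kernel throughout: it expands $((m+1)^{s-\alpha}-1)^{-1}$ as a geometric series inside the left-line remainder $R(n)$, uses \eqref{eqn:identity_for_perron_formula} to discard the $\zeta(s)$ part, reduces to $R^{\prime}(n,r):=\frac{1}{2\pi\iu}\int_{(-1/4)}\zeta(s,\frac{r}{m+1})n^s\frac{\dd s}{s(s+1)}$, and evaluates $R^{\prime}$ exactly by shifting back to $\Re s=\frac32$, where the integral is the elementary sum $\sum_{k}(1-\frac{k+r/(m+1)}{n})$; resumming the geometric series is what produces the paper's $n^2$, constant, and $1/n$ terms.

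That said, your structural instinct --- that with the $s=1$ pole removable the lower-order terms cannot arise as residues, and that a left-line integral of size $O(n^{3/4})$ in $nR(n)$ cannot manufacture an $n^2$ term --- is sound, and it in fact exposes an error in the paper's execution of the step you stopped at. Shifting $R^{\prime}(n,r)$ back with the correct signs, $\frac{1}{2\pi\iu}\int_{(-1/4)}=\frac{1}{2\pi\iu}\int_{(3/2)}-\mathrm{Res}_{s=1}-\mathrm{Res}_{s=0}$, and with the Perron sum running over $k\geq 0$, gives $R^{\prime}(n,r)=\bigl(\frac{n+1}{2}-\frac{r}{m+1}\bigr)-\frac{n}{2}-\bigl(\frac12-\frac{r}{m+1}\bigr)=0$ identically (the Hurwitz analogue of \eqref{eqn:identity_for_perron_formula}), whereas the paper adds the residues to the sum, obtaining $R^{\prime}(n,r)=n-(2-\frac1n)\frac{r}{m+1}$ and hence the spurious $n^2$ and constant terms. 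The correct conclusion of your own outline is $R(n)\equiv 0$ and the exact formula $S(n)=n^{\alpha+1}F(\log_{m+1}n)-n\frac{\sum_{r=1}^m f(r)}{f(m)(m+1)}$, which is consistent with the exact recursion $S((m+1)n)=(m+1)(f(m)+1)S(n)+n\sum_{r=1}^m f(r)$ and is confirmed numerically for $m=1$, $f(x)=2x$: there $S(2^N)=\frac12 6^N-\frac12 2^N$ contains no $4^N$ term and forces $F(0)=\frac12$ exactly, while the printed statement would make $F(0)$ depend on $N$. So your proposal is incomplete as a proof of the statement as printed, but completing it correctly refutes the printed lower-order terms rather than establishing them.
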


This paper is organized as follows. In Section 2, we prove Theorem \ref{thm:boundofC_n/nalpha}. In Section 3, we prove Theorem \ref{thm:denseproperty} and Corollary \ref{cor:density_of_selfsimilarmeasure}. In Section 4, we prove some some analytic properties of the limit function induced by Cantor-integers. In Section 5, we prove Theorem \ref{thm:perron-mellin_formula}.

\section{Proof of Theorem \ref{thm:boundofC_n/nalpha}}
With a little abuse of notation, write $m^\ell:=\underbrace{m\cdots m}_\ell$ for every $m\geq 1$. Given any finite word $u$, let $|u|$ denote the length of $u$.
We have the following observation for Cantor-integers. For simplicity, here we assume $n=[\epsilon_\ell\epsilon_{\ell-1}\cdots \epsilon_0]_{m+1}$ with $\epsilon_i \in \Sigma_m$ for some $0\leq i \leq \ell$.
For every integer $n\geq 1$ and $r\in\Sigma_m$, we have
\[ C_{{[\epsilon_\ell\cdots \epsilon_0 r]}_{m+1}}=(f(m)+1)C_{[\epsilon_\ell\cdots \epsilon_0 ]_{m+1}}+f(r).\] In  particular, for every integer $k\geq 1$, we have
\begin{equation}\label{Observ:1}
C_{{[\epsilon_\ell\cdots \epsilon_0 0^k]}_{m+1}}=(f(m)+1)^k C_{{[\epsilon_\ell\cdots \epsilon_0]}_{m+1}}.
\end{equation} 
First we give some useful propositions about Cantor-integers for general base conversion function.
\begin{proposition}\label{prop:C_n/nforngeq2}
For every $k\geq 2$ and $\epsilon_k\cdots\epsilon_1 \in \Sigma_m^{+}\times \Sigma_m^{k-1},$ we have
\begin{equation}
\frac{C_{[\epsilon_k\cdots\epsilon_1]_{m+1}}}{[\epsilon_k\cdots\epsilon_1]_{m+1}}\geq  \frac{f(m)+m+1}{2m+1}.
\end{equation}
\end{proposition}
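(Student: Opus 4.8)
The plan is to prove the stronger pointwise bound $C_n\ge R\,n$ with $R:=\frac{f(m)+m+1}{2m+1}$ for every $n$ whose base-$(m+1)$ expansion has at least two digits, by induction on the number of digits $k$. The only structural input I need about $f$ is that $f(\epsilon)\ge\epsilon$ for all $\epsilon\in\Sigma_m$: since $f$ is strictly increasing and integer valued with $f(0)=0$, each increment of the argument raises the value by at least one, so $f(\epsilon)\ge f(0)+\epsilon=\epsilon$. Note also that $f(m)=p>m$, so $f(m)-m\ge1$.

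For the base case $k=2$ write $n=(m+1)\epsilon_2+\epsilon_1$ with $\epsilon_2\in\Sigma_m^{+}$ and $\epsilon_1\in\Sigma_m$, so that $C_n=(f(m)+1)f(\epsilon_2)+f(\epsilon_1)$ by the recursion $C_{[\cdots r]_{m+1}}=(f(m)+1)C_{[\cdots]_{m+1}}+f(r)$. Clearing the denominator $2m+1$, the inequality $C_n\ge Rn$ is equivalent to
\[ (2m+1)\bigl[(f(m)+1)f(\epsilon_2)+f(\epsilon_1)\bigr]\ge (f(m)+m+1)\bigl[(m+1)\epsilon_2+\epsilon_1\bigr]. \]
I would bound the left side from below by replacing $f(\epsilon_2),f(\epsilon_1)$ with the smaller quantities $\epsilon_2,\epsilon_1$; expanding and cancelling then collapses the difference of the two sides to $(f(m)-m)(m\epsilon_2-\epsilon_1)$, which is nonnegative because $f(m)\ge m$ and $m\epsilon_2\ge m\ge\epsilon_1$ (recall $\epsilon_2\ge1$). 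This settles $k=2$.

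For the inductive step $k\ge3$, I would peel off the least significant digit: with $n=(m+1)n'+\epsilon_1$ and $n'=[\epsilon_k\cdots\epsilon_2]_{m+1}$ a $(k-1)$-digit word, the same recursion gives $C_n=(f(m)+1)C_{n'}+f(\epsilon_1)$. Applying the induction hypothesis $C_{n'}\ge Rn'$ and subtracting $Rn$, the claim reduces to
\[ R\,n'\bigl(f(m)-m\bigr)+f(\epsilon_1)-R\,\epsilon_1\ge 0. \]
Since $f(\epsilon_1)\ge0$ and $\epsilon_1\le m$, it suffices that $n'(f(m)-m)\ge m$; this holds because $n'\ge(m+1)^{k-2}\ge m+1$ (as $k\ge3$) and $f(m)-m\ge1$. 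This closes the induction and hence proves the proposition.

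The only step requiring any care is the base case $k=2$, where the finite two-digit computation must be organized so that it collapses to the single nonnegative product $(f(m)-m)(m\epsilon_2-\epsilon_1)$; once this reduction is set up, both the base case and the inductive step are short, and I expect no essential obstacle beyond keeping this bookkeeping clean.
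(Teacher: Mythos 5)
Your proof is correct and takes essentially the same route as the paper: both peel off the least significant digit (the paper iterates the inequality $\frac{(f(m)+1)C_{n'}+f(\epsilon_1)}{(m+1)n'+\epsilon_1}\ge \frac{C_{n'}}{n'}$, valid since $f(m)>m$, to reduce to the two-digit prefix, while you run the equivalent induction carrying the constant $R=\frac{f(m)+m+1}{2m+1}$), and both settle the two-digit case using $f(\epsilon)\ge\epsilon$, with the extremal configuration $\epsilon_2=1$, $\epsilon_1=m$ producing the bound. Your base-case identity $(f(m)-m)(m\epsilon_2-\epsilon_1)\ge 0$ is just an algebraic repackaging of the paper's observation that $t\mapsto \frac{f(m)+1+t}{m+1+t}$ is decreasing on $[0,m]$.
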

\begin{proof}
Firstly we claim that for every $k\geq 2$ and $\epsilon_k\cdots\epsilon_1 \in \Sigma_m^{+}\times \Sigma_m^{k-1},$ we have
\begin{equation*}
\frac{C_{[\epsilon_k\cdots\epsilon_1]_{m+1}}}{[\epsilon_k\cdots\epsilon_1]_{m+1}} \geq \frac{C_{[\epsilon_k\epsilon_{k-1}]_{m+1}}}{[\epsilon_k\epsilon_{k-1}]_{m+1}}.
\end{equation*}
In fact, if $k\geq 3,$ we have 
\begin{eqnarray*}
\frac{C_{[\epsilon_k\cdots\epsilon_1]_{m+1}}}{[\epsilon_k\cdots\epsilon_1]_{m+1}} = \frac{(f(m)+1){C_{[\epsilon_k\cdots\epsilon_2]_{m+1}}}+f(\epsilon_1)}{(m+1)[\epsilon_k\cdots\epsilon_2]_{m+1}+\epsilon_1} 
\geq \frac{C_{[\epsilon_k\cdots\epsilon_2]_{m+1}}}{[\epsilon_k\cdots\epsilon_2]_{m+1}}.
\end{eqnarray*}
The last inequality is deduced using the fact that $f(m)>m$. Applying the above inequality $(k-2)$ times,  we have that the claim holds. Meanwhile, note that 
\begin{eqnarray*}
\frac{C_{[\epsilon_k\epsilon_{k-1}]_{m+1}}}{[\epsilon_k\epsilon_{k-1}]_{m+1}} &=&  \frac{(f(m)+1)f(\epsilon_k)+f(\epsilon_{k-1})}{(m+1)\epsilon_k+\epsilon_{k-1}}\\
&\geq & \frac{(f(m)+1)\epsilon_k+\epsilon_{k-1}}{(m+1)\epsilon_k+\epsilon_{k-1}}\\
&= & \frac{(f(m)+1)+\epsilon_{k-1}/\epsilon_k}{(m+1)+\epsilon_{k-1}/\epsilon_k}\\
& \geq & \frac{f(m)+1+m}{2m+1}.
\end{eqnarray*}
This completes the proof.
\end{proof}

\begin{proposition}For every $n\geq m+1$, we have
\begin{equation}\label{Ineqn:addm}
		 \frac{C_{(m+1)n+m}}{((m+1)n+m)^{\alpha}} \leq \frac{C_n}{n^{\alpha}}.
	\end{equation}
\end{proposition}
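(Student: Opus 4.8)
The plan is to turn the claimed inequality into a single scalar inequality in $C_n/n$ and then invoke Proposition \ref{prop:C_n/nforngeq2}. Writing $N:=(m+1)n+m$, appending the digit $m$ to the base-$(m+1)$ expansion of $n$ gives, by the recursion $C_{[\epsilon_\ell\cdots\epsilon_0 m]_{m+1}}=(f(m)+1)C_{[\epsilon_\ell\cdots\epsilon_0]_{m+1}}+f(m)$, the identity $C_N=(f(m)+1)C_n+f(m)$. Since $(m+1)^{\alpha}=f(m)+1$ by the definition of $\alpha$, one may factor $N^{\alpha}=(f(m)+1)\,n^{\alpha}\bigl(1+\tfrac{m}{(m+1)n}\bigr)^{\alpha}$. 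Substituting both expressions into $C_N\,n^{\alpha}\le C_n\,N^{\alpha}$ and dividing through by $(f(m)+1)n^{\alpha}$, the desired inequality \eqref{Ineqn:addm} becomes equivalent to
\[ \Bigl(1+\frac{m}{(m+1)n}\Bigr)^{\alpha}\ \ge\ 1+\frac{f(m)}{(f(m)+1)\,C_n}. \]

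Next I would bound the left-hand side from below. Because $\alpha>1$ (as $f(m)+1>m+1$), Bernoulli's inequality gives $\bigl(1+\tfrac{m}{(m+1)n}\bigr)^{\alpha}\ge 1+\alpha\tfrac{m}{(m+1)n}$, so it suffices to prove $\alpha\tfrac{m}{(m+1)n}\ge \tfrac{f(m)}{(f(m)+1)C_n}$, that is,
\[ \frac{C_n}{n}\ \ge\ \frac{f(m)(m+1)}{\alpha\,m\,(f(m)+1)}. \]
Since $n\ge m+1$, Proposition \ref{prop:C_n/nforngeq2} supplies the uniform lower bound $C_n/n\ge \tfrac{f(m)+m+1}{2m+1}$, and therefore it is enough to verify the purely arithmetic inequality
\[ \alpha\ \ge\ R:=\frac{f(m)(m+1)(2m+1)}{m\,(f(m)+1)\,(f(m)+m+1)}. \]

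The main obstacle is this last step, which I would handle not by estimating $\alpha$ finely but by showing $R\le 1$ (recalling $\alpha>1$). Clearing denominators and writing $q:=f(m)$, the inequality $R\le1$ reads $m(q+1)(q+m+1)-q(m+1)(2m+1)\ge0$; expanding reduces the left-hand side to the quadratic $\phi(q):=mq^{2}-(m^{2}+m+1)q+(m^{2}+m)$. The key observation is that its discriminant equals $(m^{2}-m-1)^{2}$, a perfect square, whence $\phi$ factors cleanly as $\phi(q)=(q-m)\bigl(mq-(m+1)\bigr)$. For the admissible range $q=f(m)\ge m+1$ both factors are nonnegative, so $\phi(q)\ge0$, giving $R\le1<\alpha$ and completing the proof. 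I expect the spotting of the perfect-square discriminant (equivalently, the clean factorization of $\phi$) to be the only genuinely delicate point; everything else is the standard ``append a digit, apply Bernoulli, invoke the uniform lower bound on $C_n/n$'' scheme.
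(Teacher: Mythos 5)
Your proof is correct and follows essentially the same route as the paper: the same digit-appending identity, the same Bernoulli reduction to $\frac{\alpha C_n}{n}\geq \frac{f(m)(m+1)}{(f(m)+1)m}$, and the same lower bound from Proposition \ref{prop:C_n/nforngeq2}, with your final arithmetic inequality $R\leq 1$ being exactly the paper's inequality \eqref{ineqn:important1} after cross-multiplication. The only difference is that you justify that step explicitly via the factorization $\phi(q)=(q-m)\bigl(mq-(m+1)\bigr)\geq 0$ for $q=f(m)\geq m+1$, where the paper merely asserts it from $f(m)\geq m+1$.
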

\begin{proof}
Note that 
\[ \frac{C_{(m+1)n+m}}{((m+1)n+m)^{\alpha}}=\frac{(f(m)+1)C_n+f(m)}{((m+1)n+m)^{\alpha}}=\frac{C_n+\frac{f(m)}{f(m)+1}}{(n+\frac{m}{m+1})^{\alpha}}.\]
By Bernoulli inequality, it suffices to show that 
\begin{equation*}
		 \frac{\alpha C_n}{n} \geq \frac{f(m)(m+1)}{(f(m)+1)m}.
\end{equation*}
Following from Proposition \ref{prop:C_n/nforngeq2}, we have
\[ \frac{C_n}{n}\geq \frac{f(m)+1+m}{2m+1}.\]
By the fact that $f(m)\geq m+1$, we have
\begin{equation} \label{ineqn:important1}
 \frac{f(m)+1+m}{2m+1} \geq \frac{f(m)(m+1)}{(f(m)+1)m}.
\end{equation}
This completes the proof.
\end{proof}
Similarly, we have the following results.
\begin{proposition}\label{prop:addepsilon}
There exists $k_0\in\mathbb{N}$ such that for every $n\geq (m+1)^{k_0}$ and every $\epsilon\in\{1,\cdots,m\}$, we have
\begin{equation*}
		 \frac{C_{(m+1)n+\epsilon}}{((m+1)n+\epsilon)^{\alpha}} \leq \frac{C_n}{n^{\alpha}}.
	\end{equation*}
\end{proposition}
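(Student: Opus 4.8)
The plan is to imitate the proof of the previous proposition (the case $\epsilon=m$): rewrite the target inequality via the recursion $C_{(m+1)n+\epsilon}=(f(m)+1)C_n+f(\epsilon)$ together with the identity $(m+1)^{\alpha}=f(m)+1$, reduce it through the Bernoulli inequality to a lower bound on $C_n/n$, and then exploit the fact that $C_n/n$ grows without bound. Concretely, I would first write
\[ \frac{C_{(m+1)n+\epsilon}}{((m+1)n+\epsilon)^{\alpha}}=\frac{(f(m)+1)C_n+f(\epsilon)}{((m+1)n+\epsilon)^{\alpha}}=\frac{C_n+\frac{f(\epsilon)}{f(m)+1}}{\left(n+\frac{\epsilon}{m+1}\right)^{\alpha}}, \]
so that, after cross-multiplying by the two positive denominators, the desired inequality becomes equivalent to $C_n\left[(n+\tfrac{\epsilon}{m+1})^{\alpha}-n^{\alpha}\right]\geq \frac{f(\epsilon)}{f(m)+1}\,n^{\alpha}$.

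Next, since $\alpha>1$, the Bernoulli inequality gives $(n+\tfrac{\epsilon}{m+1})^{\alpha}-n^{\alpha}\geq \alpha\frac{\epsilon}{m+1}n^{\alpha-1}$, so dividing by $n^{\alpha-1}$ it suffices to prove
\[ \frac{\alpha C_n}{n}\geq \frac{f(\epsilon)(m+1)}{\epsilon(f(m)+1)}\qquad\text{for every }\epsilon\in\{1,\dots,m\}. \]
The right-hand side is at most the fixed constant $M:=\max_{\epsilon\in\Sigma_m^{+}}\frac{f(\epsilon)(m+1)}{\epsilon(f(m)+1)}$. Here lies the essential difference from the case $\epsilon=m$ handled in \eqref{Ineqn:addm}: there the constant bound $C_n/n\geq\frac{f(m)+m+1}{2m+1}$ from Proposition \ref{prop:C_n/nforngeq2} already sufficed, whereas for small $\epsilon$ the ratio $f(\epsilon)/\epsilon$ may be large, so a fixed constant lower bound on $C_n/n$ need not be enough. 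This is precisely why the statement only asserts the existence of a threshold $k_0$ rather than giving the inequality for all $n\geq m+1$.

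It therefore remains to show that $C_n/n\to\infty$, which I expect to be the only genuinely substantive step. For this I would use that if $n=[\epsilon_k\cdots\epsilon_0]_{m+1}$ with $\epsilon_k\geq 1$, then keeping only the leading term of $C_n$ gives $C_n\geq f(\epsilon_k)(f(m)+1)^{k}\geq (f(m)+1)^{k}=(m+1)^{\alpha k}$, while $n<(m+1)^{k+1}$; hence $\frac{C_n}{n}>(m+1)^{(\alpha-1)k-1}\to\infty$ as $k\to\infty$. Consequently $\frac{\alpha C_n}{n}\geq M$ once $k$ (equivalently $n$) is large enough, and choosing $k_0$ so that $\alpha(m+1)^{(\alpha-1)k_0-1}\geq M$ guarantees the inequality for all $n\geq(m+1)^{k_0}$, completing the proof. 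The algebraic reduction via Bernoulli is routine and parallels the preceding proposition; the only care needed is in the growth estimate for $C_n/n$ and in matching the exponent $k$ with the threshold $(m+1)^{k_0}$.
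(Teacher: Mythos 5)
Your proposal is correct and follows essentially the same route as the paper: the paper likewise reduces, exactly as in the proof of \eqref{Ineqn:addm}, via Bernoulli's inequality to the sufficient condition $\frac{\alpha C_n}{n}\geq \frac{f(\epsilon)(m+1)}{\epsilon(f(m)+1)}$ and then invokes $C_n/n\to\infty$. The only difference is that you spell out the leading-digit estimate $C_n\geq (f(m)+1)^k=(m+1)^{\alpha k}$ with $n<(m+1)^{k+1}$ justifying this divergence (and the explicit choice of $k_0$), which the paper states as a fact without proof.
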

\begin{proof}
As in the proof of (\ref{Ineqn:addm}), it suffices to show that there exists $n_0\in\mathbb{N}$ such that for every $n\geq n_0$ and every $\epsilon\in\{1,\cdots,m\}$,
\[  \frac{\alpha C_n}{n} \geq \frac{f(\epsilon)(m+1)}{\epsilon(f(m)+1)},\]
which follows from the fact that $\frac{C_n}{n} \to \infty$ when $n\to \infty$.
\end{proof}

\begin{proposition}\label{prop:monotoneforlargen}
There exists $k_1\in\mathbb{N}$ such that for every $n\geq (m+1)^{k_1}$, we have
\[ \frac{C_{(m+1)n}}{((m+1)n)^{\alpha}} > \frac{C_{(m+1)n+1}}{((m+1)n+1)^{\alpha}}> \cdots > \frac{C_{(m+1)n+m}}{((m+1)n+m)^{\alpha}}.\]
\end{proposition}
\begin{proof}

Given any $n=[\epsilon_{\ell}\cdots\epsilon_1]_{m+1}$ for some $\ell \geq 1$, we consider the function $T_{\ell}: [0,m] \to \mathbb{R}$ defined by 
\[ T_{\ell}(x):=\frac{C_{[\epsilon_{\ell}\cdots\epsilon_1x]_{m+1}}}{([\epsilon_{\ell}\cdots\epsilon_1x]_{m+1})^{\alpha}}=\frac{(f(m)+1)C_{[\epsilon_{\ell}\cdots\epsilon_1]_{m+1}}+f(x)}{((m+1)[\epsilon_{\ell}\cdots\epsilon_1]_{m+1}+x)^{\alpha}}. \]
Note that $\partial T_{\ell}(x)/\partial x < 0$ if and only if 
\[ f^{\prime}(x)\left((m+1)[\epsilon_{\ell}\cdots\epsilon_1]_{m+1}+x\right)-\alpha\left((f(m)+1)C_{[\epsilon_{\ell}\cdots\epsilon_1]_{m+1}}+f(x)\right)< 0. \]
In fact, following from that $C_{[\epsilon_{\ell}\cdots\epsilon_1]_{m+1}}/[\epsilon_{\ell}\cdots\epsilon_1]_{m+1} \to \infty (n\to \infty)$, if $\ell$ is sufficiently large, then for every $x\in[0,m]$, $\partial T_{\ell}(x)/\partial x < 0$. This completes the proof.
\end{proof}
 \subsection{The superior of \texorpdfstring{$\{\frac{C_n}{n^{\alpha}}:n\geq 1\}$}{}}
 For simplicity, set $M:=\max_{\epsilon\in\Sigma_m}\frac{C_\epsilon}{\epsilon^{\alpha}}$. We only need to show that for every $k\geq 0$, 
 \begin{equation*}
 \sup_{(m+1)^k\leq n < (m+1)^{k+1}} \frac{C_n}{n^{\alpha}} = M.
 \end{equation*}
 
 We will prove this by induction.  If $k=0$, it holds trivially. Assume the desired result holds for  $k\geq 0.$  By (\ref{Observ:1}), it suffices to consider $C_{(m+1)n+i}/((m+1)n+i)^{\alpha}$ for every $1\leq i\leq m$ and $(m+1)^k\leq n < (m+1)^{k+1}$.   In fact, note that
 \begin{eqnarray*}
 &&C_{(m+1)n+i}-M((m+1)n+i)^{\alpha} \\
 &=& (f(m)+1)C_n+f(i)-M((m+1)n+i)^{\alpha} \\
 						&\leq & (m+1)^{\alpha} M n^{\alpha}+Mi^{\alpha}-M((m+1)n+i)^{\alpha}\\
 						&=&M((m+1)n)^{\alpha}\left(1+\left(\frac{i}{(m+1)n}\right)^{\alpha}-\left(1+\frac{i}{(m+1)n}\right)^{\alpha}\right)\leq 0.
 \end{eqnarray*}
 The last inequality can be deduced from Bernoulli inequality. This completes the proof of the superior as in Theorem \ref{thm:boundofC_n/nalpha}. 
 
 It should be noticed that when $f$ satisfies \eqref{Cond:1}, we have the following result.
\begin{proposition}\label{prop:exact_sup_of_quadratic}
Define $T(x):=(2-\alpha)ax-b(\alpha-1)$. Then we have that
\begin{equation*}
\max_{\epsilon\in\Sigma_m^+}\frac{C_{\epsilon}}{\epsilon^{\alpha}} =\begin{cases}
f(1), &  \text{ if }a\leq 0 \text{ or }a=1,b=1,\\
\frac{f(m)}{m^{\alpha}}, &  \text{ if }a=2,b=-1,m=2 \text{ or }a=1,b=0,\\
f(1), &  \text{ if }a>0,\alpha\geq 2 \text{ and }b\geq 0,\\
\frac{f(m)}{m^{\alpha}}, &  \text{ if }a>0,\alpha\geq 2, b< 0 \text{ and } T(m)\geq 0,\\
f(1), &  \text{ if }a>0,\alpha\geq 2, b< 0 \text{ and } T(1)\leq 0,\\
\max\left\{\frac{f(\xi_m)}{{\xi_m}^{\alpha}},\frac{f(\xi_m+1)}{(\xi_m+1)^{\alpha}}\right\},  &  \text{ if }a>0,\alpha\geq  2, b< 0 \text{ and } T(m)<0<T(1).
\end{cases}
\end{equation*}
where $\xi_m:=\lfloor \frac{-b(\alpha-1)}{a(\alpha-2)}\rfloor$.
\end{proposition}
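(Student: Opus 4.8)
The plan is to reduce the discrete maximization over $\Sigma_m^{+}$ to the analysis of a single smooth function. A one-digit $\epsilon\in\Sigma_m^{+}$ has Cantor-integer $C_\epsilon=f(\epsilon)$, and under \eqref{Cond:1} the base conversion function is the quadratic $f(x)=ax^2+bx$ (recall $f(0)=0$), so that $f(1)=a+b$ and $f(m)=am^2+bm$. Hence
\[ \frac{C_\epsilon}{\epsilon^{\alpha}}=\frac{f(\epsilon)}{\epsilon^{\alpha}}=a\epsilon^{2-\alpha}+b\epsilon^{1-\alpha}=:g(\epsilon), \]
and it suffices to locate the maximum of the real function $g(x)=ax^{2-\alpha}+bx^{1-\alpha}$ over the integer points of $[1,m]$.

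First I would compute $g'(x)=x^{-\alpha}\big((2-\alpha)ax-b(\alpha-1)\big)=x^{-\alpha}T(x)$, so that on $(0,\infty)$ the sign of $g'$ coincides with the sign of $T$, where $T$ is affine with slope $(2-\alpha)a$ and unique root $x^{*}:=-b(\alpha-1)/(a(\alpha-2))$. The whole proposition then follows by reading off the monotonicity of $g$ from the sign of $T$ on $[1,m]$ and transferring the conclusion to integers: if $T\ge0$ on $[1,m]$ then $g$ is nondecreasing and the maximum is $g(m)=f(m)/m^{\alpha}$; if $T\le0$ on $[1,m]$ then $g$ is nonincreasing and the maximum is $g(1)=f(1)$; and if $T$ changes sign from $+$ to $-$ inside $(1,m)$ then $g$ is unimodal with an interior maximum at $x^{*}$, so the largest integer value is attained at one of the two neighbours $\xi_m=\lfloor x^{*}\rfloor$ or $\xi_m+1$, giving $\max\{f(\xi_m)/\xi_m^{\alpha},\,f(\xi_m+1)/(\xi_m+1)^{\alpha}\}$.

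The bulk of the work is bookkeeping the sign of $T$ across parameter regimes. When $a>0$ and $\alpha\ge2$ the slope $(2-\alpha)a$ is nonpositive, so $T$ is nonincreasing: if $b\ge0$ then $T\le0$ on $(0,\infty)$ and the maximum is $f(1)$, while if $b<0$ the three mutually exclusive possibilities $T(m)\ge0$, $T(1)\le0$, and $T(m)<0<T(1)$ produce exactly the increasing, decreasing, and unimodal subcases, i.e.\ the last four branches of the statement. The first two branches collect the remaining configurations — those with $a\le0$, and the finitely many with $a>0$ but $\alpha<2$ singled out by \eqref{Cond:1} (namely $a=1,b=0$; $a=1,b=1$; $a=2,b=-1,m=2$). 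For $f(x)=x^2$ and for $a=2,b=-1,m=2$ one checks $T>0$ on $[1,m]$, so $g$ increases and the maximum is $f(m)/m^{\alpha}$; for $a=1,b=1$ and for $a\le0$, $g$ is either nonincreasing or $\cup$-shaped, so its maximum sits at an endpoint and one verifies that $g(1)=f(1)$ dominates.

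The main obstacle is this last endpoint comparison: when $g$ is $\cup$-shaped — which occurs for $a=1,b=1$ and for $a<0$ with $\alpha>2$ — the maximum lies at $x=1$ or $x=m$, but not a priori at which. Ruling out $g(m)>g(1)$ reduces to the inequality $am^{2-\alpha}+bm^{1-\alpha}\le a+b$, which I would establish from the defining identity $(m+1)^{\alpha}=am^2+bm+1$ together with the constraints on $(a,b,m)$ imposed by \eqref{Cond:1}; this is the one place where the quadratic hypothesis, rather than soft monotonicity, is genuinely needed. A secondary technical point is the passage from the continuous maximizer $x^{*}$ to the integer maximizer in the unimodal branch, but this is immediate from unimodality: $g$ increases up to $x^{*}$ and decreases afterward, so no integer outside $\{\xi_m,\xi_m+1\}$ can exceed both neighbours.
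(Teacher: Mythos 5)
Your proposal is correct and takes essentially the same route as the paper: the paper's (very terse) proof likewise reads the maximum of $g(x)=f(x)/x^{\alpha}$ off the sign of $T(x)=x^{\alpha}g'(x)$, reducing the case $a>0,\alpha\ge 2,b<0$ to the monotonicity $T(m)\le T(1)$ and declaring the remaining finitely many parameter configurations ``verified directly.'' The one step you defer, the endpoint comparison $g(m)\le g(1)$ in the $a\le 0$ and $a=1,b=1$ cases, is indeed one of those routine checks and goes through as you predict — e.g.\ for $a<0$, condition \eqref{Cond:1} forces $b>-2a>0$, whence $g(m)\le f(m)/m^{2}=a+b/m\le a+b=g(1)$ when $\alpha\ge 2$ (and $T<0$ on $[1,m]$ when $\alpha<2$), while for $a=1,b=1$ one verifies $T(m)=(2-\alpha)m-(\alpha-1)<0$, so $g$ is in fact decreasing on $[1,m]$ and no comparison is even needed.
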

\begin{proof}
If $a>0,\alpha\geq 2$ and $b< 0$, the result follows from the fact that $T(m)\leq T(1)$.  The other cases can be verified directly.
\end{proof} 
  \subsection{The inferior of \texorpdfstring{$\{\frac{C_n}{n^{\alpha}}:n\geq 1\}$ for general $f$}{}}\label{sec:inf_for_general}
  It follows from (\ref{Ineqn:addm}) that for every finite word $u \in {\Sigma_m}^{|u|}$ with $|u|\geq 2$ and the first digit being positive,  we have
\[ \frac{C_{[u]_{m+1}}}{{[u]_{m+1}}^{\alpha}} \geq \frac{C_{[um]_{m+1}}}{{[um]_{m+1}}^{\alpha}} \geq \frac{C_{[umm]_{m+1}}}{{[umm]_{m+1}}^{\alpha}}\geq\cdots\geq \lim_{t\to\infty}\frac{C_{[um^t]_{m+1}}}{{[um^t]_{m+1}}^{\alpha}}=\frac{C_{[u]_{m+1}}+1}{([u]_{m+1}+1)^{\alpha}}.\]
 By Proposition \ref{prop:addepsilon} and Proposition \ref{prop:monotoneforlargen},  the inferior must be of the form $\frac{C_{[u]_{m+1}}+1}{([u]_{m+1}+1)^{\alpha}}$ for some $u$. 
It suffices to show that
if $|u|\geq \ell_0$ with $\ell_0$ defined as in Theorem \ref{thm:boundofC_n/nalpha}, then we have

\begin{equation}\label{ineqn:inf_monotone}
\min_{\epsilon \in \Sigma_m} \frac{C_{[u\epsilon]_{m+1}}+1}{([u\epsilon]_{m+1}+1)^{\alpha}} =\frac{C_{[um]_{m+1}}+1}{([um]_{m+1}+1)^{\alpha}} =\frac{C_{[u]_{m+1}}+1}{([u]_{m+1}+1)^{\alpha}}.
\end{equation}

 As in the proof of (\ref{Ineqn:addm}),  we only need to show that for every finite word $u$ with $|u|\geq \ell_0$ and every $\epsilon \in \Sigma_m\backslash \{m\}$,
\begin{equation*}
    \frac{ \alpha\left((f(m)+1)C_{[u]_{m+1}}+f(\epsilon)+1\right)}{(m+1)[u]_{m+1}+\epsilon+1}  \geq \frac{f(m)-f(\epsilon)}{m-\epsilon}.
\end{equation*}
In fact, if $|u|=k\geq \ell_0$, then we have
\begin{eqnarray*}
  \frac{ \alpha((f(m)+1)C_{[u]_{m+1}}+f(\epsilon)+1)}{(m+1)[u]_{m+1}+\epsilon+1}   &\geq& \frac{ \alpha(f(m)+1)C_{[u]_{m+1}}}{(m+1)([u]_{m+1}+1)}  \\
   &\geq& \frac{\alpha(f(m)+1)C_{[10^{k-1}]_{m+1}} }{(m+1)([m^k]_{m+1}+1)} \\
   &=& \frac{ \alpha(f(m)+1)^k}{(m+1)^{k+1}}\\
   &\geq&    \max_{\epsilon \in \Sigma_m\backslash\{m\} }\frac{f(m)-f(\epsilon)}{m-\epsilon} \geq  \frac{f(m)-f(\epsilon)}{m-\epsilon}.
\end{eqnarray*}
This completes the proof of the inferior for general $f$.
 \subsection{The inferior of \texorpdfstring{$\{\frac{C_n}{n^{\alpha}}:n\geq 1\}$}{} for  a class of quadratic \texorpdfstring{$f$}{}}
In this subsection, we will investigate the sequence $\{\frac{C_n}{n^{\alpha}}:n\geq 1\}$ with the  corresponding base conversion function $f$ being quadratic with zero constant term. i.e., $f(x)=ax^2+bx$ with $a,b\in \mathbb{Z}$ and
\begin{equation}\label{Cond:1}
f^{\prime}(x)=2ax+b>0\text{ for every }x \in [1,m].
\end{equation}
 If $a=0$ or $m=1$, then $f$ can be replaced with a linear function. Following from \eqref{eqn:linearcase_for_dense} or \cite[Theorem 1.1]{CY22}, we have $\inf_{n\geq 1}\frac{C_n}{n^{\alpha}}=1$.  Without loss of generality we always assume $a\neq 0$ and $m\geq 2$.
There are some useful facts for this kind of base conversion function:
\begin{itemize}
\item We have that $f(1)=a+b\geq 1$. 
\item When $1<\alpha<2$, we have that $ -am+2\leq b\leq (1-a)m+1$ since $m<f(m)< m^2+2m$. In details, if $a<0$, following from \eqref{Cond:1}, we have that $a=-1$ and $b\in \{2m,2m+1\}$; Similarly, if $a>0$, we have that either $a=2,b=-1,m=2$ or $a=1,b\in\{0,1\}$.
\item It is not hard to get that $f(m)\geq m^2$ and $\alpha \geq \frac{\log 5}{\log 3}$.
\end{itemize}

\begin{lemma}\label{lem:Cn/na_geq2}
For every $\epsilon_2\epsilon_1 \in \Sigma_m^{+}\times \Sigma_m$, we have
 \[ \inf_{\ell\geq 0}\left\{\frac{C_{[\epsilon_2\epsilon_1v]_{m+1}}}{{[\epsilon_2\epsilon_1v]_{m+1}}^{\alpha}}:v \in {\Sigma_m}^{\ell}  \right\} = \frac{C_{[\epsilon_2\epsilon_1]_{m+1}}+1}{([\epsilon_2\epsilon_1]_{m+1}+1)^{\alpha}}. \]
\end{lemma}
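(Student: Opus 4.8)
The plan is to prove the equality by the two opposite inequalities, the value on the right being $B(\epsilon_2\epsilon_1):=\frac{C_{[\epsilon_2\epsilon_1]_{m+1}}+1}{([\epsilon_2\epsilon_1]_{m+1}+1)^{\alpha}}$. The computational backbone is the identity that appending the digit $m$ preserves the shifted ratio: since $C_{[um]_{m+1}}+1=(f(m)+1)(C_{[u]_{m+1}}+1)$ and $[um]_{m+1}+1=(m+1)([u]_{m+1}+1)$, and $(m+1)^{\alpha}=f(m)+1$, we get
\[\frac{C_{[um]_{m+1}}+1}{([um]_{m+1}+1)^{\alpha}}=\frac{(f(m)+1)(C_{[u]_{m+1}}+1)}{\big((m+1)([u]_{m+1}+1)\big)^{\alpha}}=\frac{C_{[u]_{m+1}}+1}{([u]_{m+1}+1)^{\alpha}}.\]
For the inequality ``$\le$'' I take the admissible suffixes $v=m^{\ell}$: iterating the identity shows $\frac{C_{[\epsilon_2\epsilon_1 m^{\ell}]_{m+1}}+1}{([\epsilon_2\epsilon_1 m^{\ell}]_{m+1}+1)^{\alpha}}=B(\epsilon_2\epsilon_1)$ for every $\ell$, while $\frac{C_{[\epsilon_2\epsilon_1 m^{\ell}]_{m+1}}}{[\epsilon_2\epsilon_1 m^{\ell}]_{m+1}^{\alpha}}\to B(\epsilon_2\epsilon_1)$ as $\ell\to\infty$; hence the infimum over all suffixes is at most $B(\epsilon_2\epsilon_1)$.

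For the inequality ``$\ge$'' I first use the chain of inequalities from Section~\ref{sec:inf_for_general}, which gives $\frac{C_{[w]_{m+1}}}{[w]_{m+1}^{\alpha}}\ge \frac{C_{[w]_{m+1}}+1}{([w]_{m+1}+1)^{\alpha}}$ for every word $w$ of length at least $2$ with positive leading digit. Thus it suffices to prove the monotonicity $\frac{C_{[\epsilon_2\epsilon_1 v]_{m+1}}+1}{([\epsilon_2\epsilon_1 v]_{m+1}+1)^{\alpha}}\ge B(\epsilon_2\epsilon_1)$ for every suffix $v$, which I would establish by induction on $|v|$, peeling off the last digit. Because the identity above shows the value at the extension-by-$m$ coincides with the value at $u$, the induction step reduces (through the same Bernoulli estimate used to derive \eqref{Ineqn:addm}) to the single-digit comparison
\[\frac{\alpha\big((f(m)+1)C_{[u]_{m+1}}+f(\epsilon)+1\big)}{(m+1)[u]_{m+1}+\epsilon+1}\ge \frac{f(m)-f(\epsilon)}{m-\epsilon}=a(m+\epsilon)+b\]
for every word $u$ with $|u|\ge 2$ and positive leading digit and every $\epsilon\in\Sigma_m\backslash\{m\}$.

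The verification of this last inequality is the main obstacle, because for the smallest words the two sides are nearly equal. For $|u|\ge 3$ it follows comfortably: bounding the left-hand side below by $\frac{\alpha(f(m)+1)C_{[u]_{m+1}}}{(m+1)([u]_{m+1}+1)}$, applying Proposition~\ref{prop:C_n/nforngeq2} together with $[u]_{m+1}\ge(m+1)^{2}$, and using that the right-hand side is at most $a(2m-1)+b$, the estimate already succeeds. The delicate case is $|u|=2$, where $C_{[u]_{m+1}}/[u]_{m+1}$ attains its smallest values and one cannot afford to discard the term $f(\epsilon)+1$ in the numerator. Here I would substitute $C_{[\epsilon_2\epsilon_1]_{m+1}}=(f(m)+1)f(\epsilon_2)+f(\epsilon_1)$ explicitly and check the resulting inequality directly, using the structural facts available under \eqref{Cond:1}, namely $f(1)=a+b\ge 1$, $f(m)\ge m^{2}$, $\alpha\ge \log_{3}5$, and the short list of admissible pairs $(a,b)$ when $1<\alpha<2$, separating the cases $a<0$ and $a>0$ according to whether $a(m+\epsilon)+b$ is largest at $\epsilon=0$ or at $\epsilon=m-1$. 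This finite case-analysis, rather than any conceptual difficulty, is where the real work lies; combining it with the two displayed reductions yields the asserted equality.
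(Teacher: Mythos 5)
Your reduction is exactly the paper's: the ``$\leq$'' direction via the suffixes $m^{\ell}$, the chain $\frac{C_{[w]_{m+1}}}{[w]_{m+1}^{\alpha}}\geq \frac{C_{[w]_{m+1}}+1}{([w]_{m+1}+1)^{\alpha}}$ from Section \ref{sec:inf_for_general}, the appending-$m$ identity, and the Bernoulli step all reproduce the paper's route down to the single-digit comparison \eqref{ineqn:inf_monotone_quadratic}, which the paper likewise takes as the whole remaining task. The genuine gap is that you stop precisely where the paper's proof begins in earnest: the verification of \eqref{ineqn:inf_monotone_quadratic} across the admissible parameters $(a,b,m)$ is never carried out, only announced as a ``finite case-analysis''. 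It is not finite: in the regime $\alpha\geq 2$ the triples $(a,b,m)$ with $a>0$ are unbounded, so substituting $C_{[\epsilon_2\epsilon_1]_{m+1}}=(f(m)+1)f(\epsilon_2)+f(\epsilon_1)$ and ``checking directly'' still requires the parameter-regime estimates you list but do not run. The paper executes this as follows: it first disposes of $\epsilon=0$ via \eqref{ineqn:important1}, then bounds the left-hand side below, uniformly for all $|u|\geq 2$ (no split $|u|=2$ versus $|u|\geq 3$), by $\frac{\alpha(f(m)+1)(f(m)+m+1)}{(m+2)(2m+1)}$ using $[u]_{m+1}\geq m+1$ and Proposition \ref{prop:C_n/nforngeq2}, reducing to \eqref{ineqn:inf_monotone_quadratic2}; it then splits on the sign of $a$ and on $\alpha\geq 2$ versus the finite list of pairs with $1<\alpha<2$, retaining the term $f(\epsilon)+1$ only in the two tight cases $(a,b,m)=(2,-1,2)$ and $a=1,b=0,m=2$, where it checks $\frac{\alpha(7C_{[u]_3}+2)}{3[u]_3+2}\geq 5$ and $\frac{\alpha(5C_{[u]_3}+2)}{3[u]_3+2}\geq 3$ via $C_{[u]_3}\geq\frac95[u]_3$ and $C_{[u]_3}\geq\frac75[u]_3$ respectively.

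Two of your unverified assertions also deserve scrutiny. Your diagnosis that $f(\epsilon)+1$ cannot be discarded in the tight regime is correct in substance (for $a=1,b=0,m=2$ and $[u]_3=3$ the crude bound gives $\tfrac{7}{4}\alpha\approx 2.56<3$), but the delicacy is attached to the small parameter pairs of Subcase 2(2), not to $|u|=2$ per se, and the paper's uniform treatment shows your length split is unnecessary. Conversely, your claim that for $|u|\geq 3$ ``the estimate already succeeds'' is true but razor-thin rather than comfortable: at $(a,b,m)=(1,0,2)$ with $[u]_3\geq 9$ your bound evaluates to $\alpha\cdot\frac{35}{15}\cdot\frac{9}{10}=2.1\alpha\approx 3.08$ against the required $3$, a margin of about $2.5\%$ that must be displayed, not waved at. So the architecture of your proposal is sound and matches the paper, but as written it is a correct reduction plus a plan; the case analysis that constitutes the actual proof of the lemma is missing.
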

\begin{proof}
 As in the proof in Section \ref{sec:inf_for_general},  we only need to show that for every finite word $u$ with $|u|\geq 2$ and every $\epsilon \in \Sigma_m\backslash \{m\}$,
\begin{equation}\label{ineqn:inf_monotone_quadratic}
    \frac{ \alpha\left((f(m)+1)C_{[u]_{m+1}}+f(\epsilon)+1\right)}{(m+1)[u]_{m+1}+\epsilon+1}  \geq \frac{f(m)-f(\epsilon)}{m-\epsilon}=a(m+\epsilon)+b.
\end{equation}
Note that, following from the fact that $[u]_{m+1} \geq m+1$ and Proposition \ref{prop:C_n/nforngeq2}, we have
\begin{eqnarray*}
\frac{ \alpha\left((f(m)+1)C_{[u]_{m+1}}+f(\epsilon)+1\right)}{(m+1)[u]_{m+1}+\epsilon+1}
&\geq& \frac{\alpha(m+1)}{m+2}\frac{ (f(m)+1)C_{[u]_{m+1}}}{(m+1)[u]_{m+1}} \\
&\geq & \frac{\alpha(f(m)+1)(f(m)+m+1)}{(m+2)(2m+1)}.
 \end{eqnarray*}
Hence \eqref{ineqn:inf_monotone_quadratic} can be deduced from 
\begin{equation}\label{ineqn:inf_monotone_quadratic2}
 \frac{\alpha(f(m)+1)(f(m)+m+1)}{(m+2)(2m+1)}  \geq a(m+\epsilon)+b. 
\end{equation}
In fact, \eqref{ineqn:inf_monotone_quadratic} always holds for $\epsilon=0$, which follows from \eqref{ineqn:important1} and that fact that $\frac{\alpha(m+1)}{m+2}\geq 1$. It remains to consider the case $\epsilon \in \Sigma_m\backslash \{0,m\}$.
We distinguish two cases according to the sign of $a$.
\begin{description}
\item[Case 1: $a<0.$] Then  $a(m+\epsilon)+b\leq a(m+1)+b\leq \frac{f(m)}{m}$. Therefore by \eqref{ineqn:important1}, the desired inequality \eqref{ineqn:inf_monotone_quadratic2} holds.
\item[Case 2: $a>0.$] Then  $a(m+\epsilon)+b\leq a(2m-1)+b= \frac{2f(m)}{m}-f(1)$.
\item[Subcase 2(1): $\alpha\geq 2.$] Then $f(m)\geq m^2+2m$. Hence \eqref{ineqn:inf_monotone_quadratic2}  follows from 
\begin{eqnarray*}
\frac{2(f(m)+1)(f(m)+m+1)}{(m+2)(2m+1)}\geq \frac{2f(m)}{m} -1 \geq a(m+\epsilon)+b.
\end{eqnarray*}
\item[Subcase 2(2): $\alpha< 2.$] First we consider the case $a=2,b=-1,m=2$.
Then $f(m)=6$ and $\alpha=\frac{\log 7}{\log 3}> 1.7$. Note that we only need to verify \eqref{ineqn:inf_monotone_quadratic} for $\epsilon=1$. In other words,
\[ \frac{\alpha(7C_{[u]_3}+2)}{3[u]_3+2}\geq 5.\]
In fact, the above inequality follows from the fact that $C_{[u]_3}\geq \tfrac{9}{5}[u]_3$ by Proposition \ref{prop:C_n/nforngeq2}.
Now we investigate the case $a=1,b=1$. Then $f(m)=m^2+m$ and $\alpha=\frac{\log (m^2+m+1)}{\log (m+1)}> 1.7$.  Hence that 
\begin{eqnarray*}
 \frac{\alpha(f(m)+1)(f(m)+m+1)}{(m+2)(2m+1)} &=& \frac{\alpha(m^2+m+1)(m^2+2m+1)}{(m+2)(2m+1)}\\
&\geq & \frac{\alpha(m^2+m+1)m}{2m+1} \\
&\geq & 2m = \frac{2f(m)}{m}-f(1).
\end{eqnarray*}
At last, we focus on the case $a=1,b=0$. If $m=2$, then $\alpha =\frac{\log 5}{\log 3}$, and it suffices to check that 
\[ \frac{\alpha(5C_{[u]_3}+2)}{3[u]_3+2} \geq 3.\]
Following from Proposition \ref{prop:C_n/nforngeq2}, $C_{[u]_3}\geq \tfrac{7}{5}[u]_3$. This implies the  above inequality holds. Now we assume $m\geq 3$. Recall that $\frac{2f(m)}{m}-f(1)=2m -1$. Then we have
\begin{eqnarray*}
 \frac{\alpha(f(m)+1)(f(m)+m+1)}{(m+2)(2m+1)} &=& \frac{\alpha(m^2+1)(m^2+m+1)}{(m+2)(2m+1)}
\geq  2m -1,
\end{eqnarray*}
where the last inequality follows from the monotonicity of $\frac{\alpha(m^2+1)(m^2+m+1)}{(m+2)(2m+1)}-2m+1$ with respect to $m\geq 3$. This completes the proof.
\end{description}
\end{proof}
Set $\delta_a:=1$ if $a>0$ otherwise $\delta_a:=0$. We have the following lemma.
\begin{lemma}\label{lem:min_for_sencond_whenfirstdigit>1}
 If $\epsilon_2\in\{1+\delta_a,\cdots,m\}$, then we have
\begin{equation*}
\min_{\epsilon\in\Sigma_m}\frac{C_{[\epsilon_2\epsilon]_{m+1}}+1}{([\epsilon_2\epsilon]_{m+1}+1)^{\alpha}} =\frac{f(\epsilon_2)+1}{(\epsilon_2+1)^{\alpha}}. 
\end{equation*}
\end{lemma}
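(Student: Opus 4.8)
The plan is to show that, with the first digit $\epsilon_2\ge 1+\delta_a$ held fixed, the quantity $\frac{C_{[\epsilon_2\epsilon]_{m+1}}+1}{([\epsilon_2\epsilon]_{m+1}+1)^{\alpha}}$ is minimized over the second digit $\epsilon\in\Sigma_m$ at $\epsilon=m$, and that its value there is exactly the asserted right-hand side. First I would record two consequences of the recursion $C_{[\epsilon_2\epsilon]_{m+1}}=(f(m)+1)f(\epsilon_2)+f(\epsilon)$ used throughout Section 2, namely $[\epsilon_2 m]_{m+1}+1=(m+1)(\epsilon_2+1)$ and $C_{[\epsilon_2 m]_{m+1}}+1=(f(m)+1)(f(\epsilon_2)+1)$. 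Since $(m+1)^{\alpha}=f(m)+1$, these give $\frac{C_{[\epsilon_2 m]_{m+1}}+1}{([\epsilon_2 m]_{m+1}+1)^{\alpha}}=\frac{f(\epsilon_2)+1}{(\epsilon_2+1)^{\alpha}}$, which is precisely the claimed value; so it remains only to prove that $\epsilon=m$ realizes the minimum.

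I would then run the same comparison-and-Bernoulli device used for \eqref{Ineqn:addm} and \eqref{ineqn:inf_monotone}. Comparing the value at an arbitrary $\epsilon\in\Sigma_m\setminus\{m\}$ with the value at $\epsilon=m$ and cross-multiplying, the desired inequality becomes $\frac{C_{[\epsilon_2 m]_{m+1}}+1}{C_{[\epsilon_2\epsilon]_{m+1}}+1}\le\bigl(1+\tfrac{m-\epsilon}{(m+1)\epsilon_2+\epsilon+1}\bigr)^{\alpha}$, and applying Bernoulli's inequality to the right-hand side reduces matters to exactly the quadratic form \eqref{ineqn:inf_monotone_quadratic}, now with the prefix replaced by the single digit $\epsilon_2$:
\[
\frac{\alpha\bigl(C_{[\epsilon_2\epsilon]_{m+1}}+1\bigr)}{(m+1)\epsilon_2+\epsilon+1}\ \ge\ \frac{f(m)-f(\epsilon)}{m-\epsilon}=a(m+\epsilon)+b,\qquad \epsilon\in\Sigma_m\setminus\{m\}.
\]
This is the length-one-prefix analogue of \eqref{ineqn:inf_monotone_quadratic}; crucially, the uniform estimate $[u]_{m+1}\ge m+1$ that drove Lemma \ref{lem:Cn/na_geq2} is no longer available, so a separate argument is needed.

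To prove the displayed inequality I would first reduce to the extremal first digit $\epsilon_2=1+\delta_a$: writing the difference of the two sides (cleared of the denominator) as a function of $\epsilon_2$, its leading part is $\alpha(f(m)+1)f(\epsilon_2)$, increasing on $[1+\delta_a,m]$ by \eqref{Cond:1}, while the subtracted term is only linear in $\epsilon_2$; using $f(m)\ge m^2$ one checks the $\epsilon_2$-derivative is nonnegative, so the inequality is hardest at $\epsilon_2=1+\delta_a$. With $\epsilon_2$ fixed I would finish, as in Lemma \ref{lem:Cn/na_geq2}, by splitting on the sign of $a$ and, for $a>0$, on whether $\alpha\ge 2$ or $\alpha<2$ (the finitely many surviving profiles $a=2,b=-1,m=2$, $a=1,b=1$, $a=1,b=0$), keeping the exact values $C_{[\epsilon_2\epsilon]_{m+1}}=(f(m)+1)f(\epsilon_2)+f(\epsilon)$ and $f(1)=a+b$, $f(1+\delta_a)$ rather than the crude bounds of Proposition \ref{prop:C_n/nforngeq2}. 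The main obstacle is exactly this tightness: the hypothesis $\epsilon_2\ge 1+\delta_a$ is sharp, since for $a>0$ the value $\epsilon_2=1$ makes $f(\epsilon_2)=f(1)$ so small that the inequality fails (for instance $f(x)=x^2$, $m=2$, $\epsilon_2=1$ already has its minimum at the second digit $\epsilon=1$, not $\epsilon=m$), whereas $\epsilon_2\ge 2$ supplies $f(\epsilon_2)\ge f(2)$, just enough to close every remaining case; care must therefore be taken not to discard $f(1)$ or $f(\epsilon_2)$, as any coarse lower bound loses the needed room.
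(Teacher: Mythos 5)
Your proposal is correct and its core is the same as the paper's: the paper likewise reduces the lemma, via the Bernoulli device behind \eqref{Ineqn:addm} and \eqref{ineqn:inf_monotone}, to exactly your displayed inequality, which is its \eqref{ineqn:inf_monotone_quadratic3} (note $C_{[\epsilon_2\epsilon]_{m+1}}+1=(f(m)+1)f(\epsilon_2)+f(\epsilon)+1$), and your direct verification that the value at $\epsilon=m$ equals $\frac{f(\epsilon_2)+1}{(\epsilon_2+1)^{\alpha}}$ via $(m+1)^{\alpha}=f(m)+1$ is the explicit form of what the paper leaves implicit in \eqref{ineqn:inf_monotone}. Where you genuinely diverge is the endgame. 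The paper does not reduce to the extremal digit $\epsilon_2=1+\delta_a$; instead it bounds the ratio $f(\epsilon_2)/\epsilon_2$ from below ($\geq am+b=f(m)/m\geq m$ when $a<0$, $\geq 2a+b$ when $a>0$ and $\epsilon_2\geq 2$), rewrites the left side as a function of $t=(\epsilon+1)/\epsilon_2\in(0,m/2]$, and optimizes in $t$, which for $a>0$ reduces everything to the single inequality \eqref{ineqn:inf_monotone_quadratic4} before splitting on $\alpha\geq 2$ versus the finitely many profiles $a=2,b=-1,m=2$; $a=1,b=1$; $a=1,b=0$ — the same list you arrive at. Your alternative, monotonicity of the cleared difference in $\epsilon_2$, does go through, but it is the one step you should not wave at: for $a<0$ the derivative bound needs $\alpha(f(m)+1)f'(m)\geq \frac{(m+1)f(m)}{m}$ with possibly $f'(m)=2am+b=1$, and at $m=2$ this is the near-miss $\alpha\geq \frac{3f(m)}{2(f(m)+1)}$ (with $\alpha\approx 1.465$ against a bound approaching $1.5$), which holds only because large $f(m)$ forces large $\alpha$; the paper's ratio-plus-homogenization route avoids this delicacy at the cost of the slightly cruder constant $3m/2+1$. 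Your sharpness example ($f(x)=x^2$, $m=2$, $\epsilon_2=1$, where the minimum sits at $\epsilon=1$) is correct and is a worthwhile addition: it explains why $\delta_a$ appears in the hypothesis and why the theorem handles $\epsilon_2=1$, $a>0$ separately through $\min_{\epsilon\in\Sigma_m}\frac{C_{m+\epsilon+1}+1}{(m+\epsilon+2)^{\alpha}}$ and Proposition \ref{prop:exact_inf_of_quadratic}.
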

\begin{proof}
Fixed $\epsilon_2\in\{1+\delta_a,\cdots,m\}$. Like the proof of Lemma \ref{lem:Cn/na_geq2}, it suffices to show that for every $\epsilon \in \Sigma_m \backslash \{m\}$,
\begin{equation}\label{ineqn:inf_monotone_quadratic3}
    \frac{ \alpha\left((f(m)+1)f(\epsilon_2)+f(\epsilon)+1\right)}{(m+1)\epsilon_2+\epsilon+1}  \geq \frac{f(m)-f(\epsilon)}{m-\epsilon}=a(m+\epsilon)+b.
\end{equation}
\begin{description}
\item[Case 1: $a< 0.$ ] Then $a(m+\epsilon)+b\leq am+b=\frac{f(m)}{m}$. Note that $b\geq -2am$ if $a<-1$ otherwise $b\geq 2m$. Hence $f(\epsilon_2)/\epsilon_2\geq ma+b\geq m$ for every $\epsilon_2 \geq 1$. Hence that 
\begin{eqnarray*}
\frac{ \alpha\left((f(m)+1)f(\epsilon_2)+f(\epsilon)+1\right)}{(m+1)\epsilon_2+\epsilon+1} 
&\geq & \frac{ \alpha\left((f(m)+1)m\epsilon_2+\epsilon+1\right)}{(m+1)\epsilon_2+\epsilon+1} \\
&= & \frac{ \alpha\left((f(m)+1)m+(\epsilon+1)/\epsilon_2\right)}{m+1+(\epsilon+1)/\epsilon_2} \\
&\geq & \frac{ \alpha((f(m)+1)m+m) }{2m+1}\\
&\geq & \frac{f(m)}{m} \geq a(m+\epsilon)+b.
\end{eqnarray*}
\item[Case 2: $a>0.$]  Then $a(m+\epsilon)+b\leq a(2m-1)+b= \frac{2f(m)}{m}-f(1)$ and $f(\epsilon_2)/\epsilon_2\geq 2a+b$ for every $\epsilon_2\geq 2$. To prove \eqref{ineqn:inf_monotone_quadratic3}, we only need to show that
\begin{equation}\label{ineqn:inf_monotone_quadratic4}
 \frac{ \alpha((2a+b)(f(m)+1)+m/2)}{3m/2+1} \geq \frac{2f(m)}{m}-f(1). 
 \end{equation} 
This follows from that
\begin{eqnarray*}
\frac{ \alpha\left((f(m)+1)f(\epsilon_2)+f(\epsilon)+1\right)}{(m+1)\epsilon_2+\epsilon+1} 
&\geq & \frac{ \alpha\left((2a+b)(f(m)+1)\epsilon_2+\epsilon+1\right)}{(m+1)\epsilon_2+\epsilon+1} \\
&= & \frac{ \alpha\left((2a+b)(f(m)+1)+(\epsilon+1)/\epsilon_2\right)}{m+1+(\epsilon+1)/\epsilon_2}\\
&\geq&\frac{ \alpha((2a+b)(f(m)+1)+m/2)}{3m/2+1}.
\end{eqnarray*}
\item[Subcase 2(1): $\alpha\geq 2.$] Then \eqref{ineqn:inf_monotone_quadratic4} follows from the fact that $2a+b\geq 2$.
\item[Subcase 2(2): $\alpha< 2.$] First we consider the case $a=2,b=-1,m=2$ or $a=1,b=1$. Then $2a+b=3$ and $\alpha \geq \frac{\log 7}{\log 3}$. This implies \eqref{ineqn:inf_monotone_quadratic4} holds. Now for the case $a=1,b=0$. Then \eqref{ineqn:inf_monotone_quadratic3} is equivalent to 
\begin{eqnarray*}
\alpha((m^2+1)\epsilon_2^2+\epsilon^2+1) - (m+\epsilon)((m+1)\epsilon_2+\epsilon+1)\geq 0.
\end{eqnarray*}
Note that the left-hand side of the above inequality is decreasing with respect to $\epsilon\in[0,m-1]$, Hence that
\begin{eqnarray*}
&&\alpha((m^2+1)\epsilon_2^2+\epsilon^2+1) - (m+\epsilon)((m+1)\epsilon_2+\epsilon+1) \\ 
&\geq& \alpha((m^2+1)\epsilon_2^2+(m-1)^2+1) - (2m-1)((m+1)\epsilon_2+m)\\
&\geq& \alpha(4(m^2+1)+(m-1)^2+1) - (2m-1)(2(m+1)+m)\geq 0.
\end{eqnarray*}
This completes the proof.
\end{description}

\end{proof}
\begin{lemma}\label{lem:min_for_firstdigit}
We have
\begin{equation*}
\min_{x\in\{1,\cdots,m\}}\frac{f(x)+1}{(x+1)^{\alpha}} = 
\begin{cases}
1, & \text{ if }a<0,\\
\min\left\{1,\frac{a+b+1}{2^{\alpha}}\right\}, &  \text{ if }a>0.
\end{cases}
\end{equation*}
\end{lemma}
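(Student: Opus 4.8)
The plan is to study the single real-variable function $\psi(x):=\frac{f(x)+1}{(x+1)^{\alpha}}=\frac{ax^2+bx+1}{(x+1)^{\alpha}}$ on $[0,m]$ and to locate the minimum of its restriction to the integers $\{1,\dots,m\}$. The starting observation is that $\psi$ takes the same value at both ends of the range: since $f(0)=0$ we get $\psi(0)=1$, and since $\alpha=\log_{m+1}(f(m)+1)$ gives $(m+1)^{\alpha}=f(m)+1$ we get $\psi(m)=1$. Hence $\psi(m)=1$ is always one of the attained integer values, the minimum is at most $1$, and everything reduces to comparing $\psi$ with the constant $1$ and understanding its interior behaviour. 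For this I set $D(x):=f(x)+1-(x+1)^{\alpha}$, so that $\psi(x)\gtrless 1\iff D(x)\gtrless 0$ and $D(0)=D(m)=0$, with
\[ D''(x)=2a-\alpha(\alpha-1)(x+1)^{\alpha-2}. \]

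When $a<0$ the proof is immediate: since $\alpha>1$, each term of $D''$ is negative, so $D$ is strictly concave on $[0,m]$. A strictly concave function vanishing at the two endpoints $0$ and $m$ is strictly positive in between, so $D>0$ on $(0,m)$, i.e.\ $\psi>1$ there while $\psi(m)=1$. Therefore the minimum over $\{1,\dots,m\}$ is $\psi(m)=1$; in particular $\psi(1)=\frac{a+b+1}{2^{\alpha}}\ge 1$, which is consistent with the stated value.

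For $a>0$ the target value is $\min\{1,\psi(1)\}=\min\{\psi(1),\psi(m)\}$, so it suffices to prove that the continuous minimum of $\psi$ over $[1,m]$ is attained at an endpoint; then, $\psi(1)$ and $\psi(m)=1$ being attained integer values, the discrete minimum equals $\min\{1,(a+b+1)/2^{\alpha}\}$. I would control the monotonicity of $\psi$ through the sign of $\psi'$, which equals that of $N(x):=f'(x)(x+1)-\alpha(f(x)+1)=a(2-\alpha)x^2+(2a+b(1-\alpha))x+(b-\alpha)$. At any critical point the second-derivative test simplifies, after substituting $f''=2a$ and the critical relation $f(x)+1=f'(x)(x+1)/\alpha$, to the linear quantity $R(x):=2a(\alpha-2)x+(\alpha-1)b-2a$: a critical point is a local maximum when $R>0$ and a local minimum when $R<0$. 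The goal is thus to exclude interior local minima in $(1,m)$, which makes $\psi$ increasing-then-decreasing (quasiconcave) on $[1,m]$ and forces its minimum to an endpoint. For $\alpha\ge 2$, $N$ is a concave parabola and $R$ is nondecreasing, so the analysis localizes to the smaller root of $N$ and to the sign of $R$, where the constraint $f'(1)=2a+b>0$ enters. For $1<\alpha<2$ the facts listed after \eqref{Cond:1} leave only $(a,b,m)\in\{(2,-1,2),(1,0,m),(1,1,m)\}$, each of which can be checked directly, as in Lemma \ref{lem:Cn/na_geq2}.

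The main obstacle is precisely this exclusion of an interior local minimum in the case $a>0$: equivalently, one must show that the smaller root of the concave parabola $N$ never exceeds $1$, so that $\psi$ cannot first decrease and then turn back up strictly inside $(1,m)$. Unlike the concavity argument available for $a<0$, this cannot be settled by a soft sign argument, because $N$ genuinely may vanish inside $(0,m)$; it is here that the exact value $\alpha=\log_{m+1}(f(m)+1)$ together with $2a+b>0$ must be fed into the inequality $R(x^{*})\ge 0$ at the relevant critical point $x^{*}$.
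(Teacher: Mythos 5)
Your case $a<0$ is complete and in fact slicker than the paper's treatment: the paper proves the stronger statement that $F(x)=\frac{f(x)+1}{(x+1)^{\alpha}}$ is nonincreasing on $[1,m]$ via a sign analysis of $\tilde{F}(x)=f'(x)(x+1)-\alpha(f(x)+1)$, split into the subcases $\alpha\geq 2$ and $a=-1$, $b\in\{2m,2m+1\}$, whereas your concavity argument for $D(x)=f(x)+1-(x+1)^{\alpha}$ with $D(0)=D(m)=0$ yields $\psi>1$ on $(0,m)$ with no case analysis at all. Your reduction for $a>0$ is also sound: the endpoint values are $\psi(1)=\frac{a+b+1}{2^{\alpha}}$ and $\psi(m)=1$, so it suffices to exclude an interior local minimum of $\psi$ on $(1,m)$; and your $N$ and $R$ are exactly the paper's $\tilde{F}$ and $-\tilde{F}'$ (indeed $R(1)=\alpha(2a+b)-(6a+b)=-\tilde{F}'(1)$). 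But the proof is not finished: for $a>0$, $\alpha\geq 2$ you declare the exclusion of an interior local minimum to be ``the main obstacle'' and only indicate where the hypotheses ``must be fed in,'' without carrying out any inequality. That is precisely where all the real work of this lemma lies, so the proposal has a genuine gap.

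Concretely, what must be shown is the dichotomy: either $N(1)\geq 0$ (equivalently $\alpha\leq \frac{4a+2b}{a+b+1}$), in which case the concave parabola $N$ is positive-then-negative on $[1,m]$, $\psi$ is increasing-then-decreasing, and the minimum is at an endpoint; or $R(1)\geq 0$ (equivalently $\alpha\geq \frac{6a+b}{2a+b}$), in which case $R$, being nondecreasing for $\alpha\geq 2$ and $a>0$, is nonnegative on all of $[1,m]$, so $N'\leq 0$ there, $N$ stays negative once $N(1)<0$, and $\psi$ is decreasing. The paper proves these two horns exhaust all possibilities by an arithmetic argument you do not supply: $\frac{4a+2b}{a+b+1}<\frac{6a+b}{2a+b}$ holds if and only if $2a^2+ab+b^2-6a-b<0$, which restricts $(a,b)$ to the finite list $(1,0),(1,1),(2,-1),(2,0),(2,1),(3,-1)$, and for each of these one checks $\alpha<\frac{4a+2b}{a+b+1}$ directly, so the first horn applies anyway. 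Without this comparison of $\alpha$ against the two rational thresholds (or some substitute), your assertion that ``the smaller root of $N$ never exceeds $1$'' remains unproven — it does not follow from $2a+b>0$ alone, as your own closing paragraph concedes. Your remaining claim, that the cases with $1<\alpha<2$ can be checked directly, is correct and matches the paper's Subcase 2(1), though note that $a=1$, $b\in\{0,1\}$ is a family indexed by $m$, not a single case; there one verifies $\tilde{F}(1)=4a+2b-\alpha(a+b+1)\geq 0$ and $\tilde{F}'\geq 0$ on $[1,m]$, both immediate from $\alpha<2$, so the minimum is $\psi(1)$.
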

\begin{proof}
Consider the function $F(x):=\frac{f(x)+1}{(x+1)^{\alpha}}$ defined on $[1,m]$. It is easy to get that for each $x\in [1,m]$, $F^{\prime}(x)\leq 0$ if and only if $ f^{\prime}(x)(x+1)-\alpha(f(x)+1)\leq 0$.
Set $\tilde{F}(x):= f^{\prime}(x)(x+1)-\alpha(f(x)+1)$. Note that ${\tilde{F}}^{\prime}(x)=2a(x+1)-(\alpha-1)f^{\prime}(x)$.
\begin{description}
\item[Case 1: $a<0.$] Then ${\tilde{F}}^{\prime}(x)\leq 0$ for each $x\in [1,m]$. We will only need to show that ${\tilde{F}}(1)\leq 0.$ Since if this holds, then we have $\tilde{F}(x)\leq 0$ for each $x\in[1,m]$, which implies that
\[ \min_{x\in\{1,\cdots,m\}}\frac{f(x)+1}{(x+1)^{\alpha}} = 1. \]
In fact, we have
\begin{eqnarray*}
{\tilde{F}}(1) = 4a+2b-\alpha(a+b+1)
\leq
\begin{cases}
 2a-2\leq 0,  & \text{ if }\alpha\geq 2,\\
 \frac{4m^2}{(m^2+1)\log(m+1)}-4\leq 0,  & \text{ if }a=-1,b=2m,\\
 \frac{m(2m+1)}{(m^2+1)\log(m+1)}-4\leq 0,  & \text{ if }a=-1,b=2m+1.\\
\end{cases}
\end{eqnarray*} 
\item[Case 2: $a>0.$] 
\item[Subcase 2(1): $1<\alpha<2.$ ] In other words, $a=2,b=-1,m=2$ or $a=1,b\in\{0,1\}$. We have that ${\tilde{F}}^{\prime}(x)\geq 0$ for each $x\in [1,m]$. Note that ${\tilde{F}}(1)= 4a+2b-\alpha(a+b+1)\geq 0.$ This implies $\tilde{F}(x)\geq 0$ for each $x\in[1,m]$, Hence that
\[ \min_{x\in\{1,\cdots,m\}}\frac{f(x)+1}{(x+1)^{\alpha}} = \frac{a+b+1}{2^{\alpha}}. \]

\item[Subcase 2(2): $\alpha\geq 2.$] Note that
\[ \tilde{F}(x)= f^{\prime}(x)(x+1)-\alpha(f(x)+1)=(2-\alpha)ax^2+(2a+b-\alpha b)x+b-\alpha. \]
Hence if $\tilde{F}(1)\geq 0$, namely $\alpha\leq \frac{4a+2b}{a+b+1}$, then we have \[\min_{x\in[1,m]}F(x)=\min\{F(1),F(m)\}=\min\left\{1,\frac{a+b+1}{2^{\alpha}}\right\}.\]
Now it remains to consider the case $\alpha> \frac{4a+2b}{a+b+1}$ (or equivalently $\tilde{F}(1)< 0$).  Note that ${\tilde{F}}^{\prime}(x)\leq {\tilde{F}}^{\prime}(1)$ for each $x\in[1,m]$. We only need to show that ${\tilde{F}}^{\prime}(1)\leq 0$. Namely, $\alpha> \frac{6a+b}{2a+b}$. If this inequality holds, then we have $\tilde{F}(x)\leq 0$ for each $x\in[1,m]$. Hence that
\[ \min_{x\in\{1,\cdots,m\}}\frac{f(x)+1}{(x+1)^{\alpha}} = 1. \]
In fact, we have $\frac{4a+2b}{a+b+1}< \frac{6a+b}{2a+b}$ if and only if
$ 2a^2+ab-6a-b+b^2< 0 $
if and only if one of the following cases occurs: (i) $a=1,b\in\{0,1\}$; (ii) $a=2,b\in\{-1,0,1\}$; (iii) $a=3,b=-1$. For each case, it is not hard to get that $\alpha<\frac{4a+2b}{a+b+1}$ which contradicts the assumption. It follows that $\alpha > \frac{6a+b}{2a+b}$. This completes the proof.
\end{description}
\end{proof}
\begin{proof}[Proof of the inferior in Theorem \ref{thm:boundofC_n/nalpha} for $f$ satisfying \eqref{Cond:1}]
It follows from Lemma \ref{lem:Cn/na_geq2}, Lemma \ref{lem:min_for_sencond_whenfirstdigit>1} and Lemma \ref{lem:min_for_firstdigit} directly.
\end{proof}

When $f$ satisfies (\ref{Cond:1}), it should be noticed that the exact values of the inferior in Theorem \ref{thm:boundofC_n/nalpha} depend on the values of $a,b$ and $m$. In details, we will give the following result.
\begin{proposition}\label{prop:exact_inf_of_quadratic}
Given any $a>0$ and define \[T(x):=(2ax+b)(x+m+2)-\alpha\left((f(m)+1)f(1)+f(x)+1\right).\] Then we have that
\begin{equation*}
\min_{\epsilon\in\Sigma_m}\frac{C_{m+\epsilon+1}+1}{(m+\epsilon+2)^{\alpha}} =\begin{cases}
\frac{a+b+1}{2^{\alpha}}, &  \text{ if }T(m)\leq 0,\\
\min\left\{ \frac{C_{m+\xi_m+1}+1}{(m+\xi_m+2)^{\alpha}},\frac{C_{m+\xi_m+2}+1}{(m+\xi_m+3)^{\alpha}}\right\},  & \text{ otherwise}.
\end{cases}
\end{equation*}
where $\xi_m:=\min\{x\in\Sigma_m: T(x)\leq 0\text{ but } T(x+1)>0 \}$.
\end{proposition}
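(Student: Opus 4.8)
The plan is to regard the quantities $\frac{C_{m+\epsilon+1}+1}{(m+\epsilon+2)^\alpha}$, $\epsilon\in\Sigma_m$, as the integer samples of a single real-variable function and to read off its minimum from the monotonicity encoded by $T$. Since $m+\epsilon+1=[1\epsilon]_{m+1}$, one has $C_{m+\epsilon+1}=(f(m)+1)C_{[1]_{m+1}}+f(\epsilon)=(f(m)+1)f(1)+f(\epsilon)$, and recalling $f(m)+1=(m+1)^\alpha$ I would introduce
\[
H(x):=\frac{(f(m)+1)f(1)+f(x)+1}{(m+x+2)^\alpha},\qquad x\in[0,m],
\]
so that $H(\epsilon)=\frac{C_{m+\epsilon+1}+1}{(m+\epsilon+2)^\alpha}$. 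A direct differentiation gives that the sign of $H'(x)$ equals the sign of $T(x)$, exactly as the computation of $\tilde F$ in the proof of Lemma \ref{lem:min_for_firstdigit}; thus $T$ governs the monotonicity of $H$. Two elementary facts anchor the statement. First, the identity $\frac{C_{[um]_{m+1}}+1}{([um]_{m+1}+1)^\alpha}=\frac{C_{[u]_{m+1}}+1}{([u]_{m+1}+1)^\alpha}$ (a direct consequence of $f(m)+1=(m+1)^\alpha$), applied to $u=1$, yields $H(m)=\frac{f(1)+1}{2^\alpha}=\frac{a+b+1}{2^\alpha}$. Second, $T(0)=b(m+2)-\alpha\bigl(1+(f(m)+1)f(1)\bigr)<0$: this is clear when $b\le 0$, and when $b>0$ it follows from $\alpha(m+1)^\alpha f(1)\gg b(m+2)$.

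The heart of the argument is the structural claim that on $\{0,\dots,m\}$ the sign of $T$ switches at most once, and only from $\le 0$ to $>0$; equivalently $H$ is first non-increasing and then non-decreasing, with a single valley. Granting this, the dichotomy is immediate. If $T(m)\le 0$, then $T\le 0$ on all of $[0,m]$, so $H$ is non-increasing and $\min_{\epsilon}H(\epsilon)=H(m)=\frac{a+b+1}{2^\alpha}$. If $T(m)>0$, then $T$ has its unique upward zero at some real $r\in(\xi_m,\xi_m+1)$, so $H$ strictly decreases on $[0,r]$ and strictly increases on $[r,m]$; hence the integer minimum is attained at $\lfloor r\rfloor=\xi_m$ or $\lceil r\rceil=\xi_m+1$, giving $\min\{H(\xi_m),H(\xi_m+1)\}$. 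This reproduces the two cases in the statement verbatim.

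To establish the single-switch claim I would split on the sign of the leading coefficient $a(2-\alpha)$ of the quadratic $T$. When $\alpha\le 2$ (here only the finitely many families $a=2,b=-1,m=2$ and $a=1,b\in\{0,1\}$, together with $\alpha=2$), $T$ is convex or linear; I would check that $T'(0)=2a(m+2)+b(1-\alpha)>0$ while $T'$ is non-decreasing, so $T$ is strictly increasing on $[0,m]$, and combined with $T(0)<0$ this forces exactly one upward crossing. When $\alpha>2$, $T$ is concave, and here I would instead prove the stronger fact that $T<0$ throughout $[0,m]$, so that the concave case always lands in the first branch. The main obstacle is precisely this concave estimate: a priori a downward parabola with $T(0)<0$ could rise above zero and fall back, producing a spurious interior valley unaccounted for by the formula. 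I would rule this out by bounding $f'(x)(m+x+2)\le (2am+b)\,2(m+1)$ on $[0,m]$ and comparing it against $\alpha(f(m)+1)f(1)=\alpha(m+1)^\alpha f(1)$, showing the latter dominates because $\alpha>2$ forces $(m+1)^\alpha$ to outgrow the polynomial factor (equivalently, the vertex of $T$ lies beyond $m$ whenever $T$ is not already negative at $m$). Once the single-switch property is secured, identifying the minimizer as $\xi_m$ or $\xi_m+1$ and recognizing $H(m)=\frac{a+b+1}{2^\alpha}$ are routine.
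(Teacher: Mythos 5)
Your setup is exactly the paper's: the paper also introduces $S(x):=\frac{C_{m+x+1}+1}{(m+x+2)^{\alpha}}$ (your $H$), observes that $S'(x)\le 0$ if and only if $T(x)\le 0$, notes $T(0)\le 0$, and reads the dichotomy off the sign pattern of $T$ on $[0,m]$; your convex case $\alpha\le 2$ and the identification $H(m)=\frac{a+b+1}{2^{\alpha}}$ are fine. The genuine gap is in your concave case: the claim that $\alpha>2$ forces $T<0$ throughout $[0,m]$ is simply false, so concave instances can land in the second branch. Concretely, take $f(x)=4x^2-3x$ with $m=2$ (so $a=4$, $b=-3$, which satisfies \eqref{Cond:1}, and $f(1)=1$, $f(2)=10$): then $f(m)+1=11$, $\alpha=\log_3 11\approx 2.183>2$, yet $T(m)=(2am+b)\cdot 2(m+1)-\alpha(f(m)+1)(f(1)+1)=78-22\alpha\approx 29.98>0$, and indeed the minimum sits at the interior point $\epsilon=1$ (one checks $C_3=11$, $C_4=12$, $C_5=21$, and $13/5^{\alpha}<22/6^{\alpha}<12/4^{\alpha}$), not at $\epsilon=m$. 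Your proposed domination bound fails precisely here: $f(1)=a+b$ can be as small as $1$ (take $b=1-a$), and then $f'(m)\cdot 2(m+1)\sim 4am(m+1)$ while $\alpha(m+1)^{\alpha}f(1)=\alpha(am^2+bm+1)(a+b)\sim \alpha\, am(m-1)$ — both sides are linear in $a$, and for small $m$ the left side wins. The point is that $(m+1)^{\alpha}=f(m)+1$ is itself only a quadratic polynomial in $m$ and linear in $a$, so ``$\alpha>2$ makes the exponential outgrow the polynomial'' has no content here.

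What the concave case actually requires — and what the paper supplies — is two separate arguments. When $T(m)>0$, no estimate is needed: a concave parabola is positive exactly between its two roots $r_1<r_2$, and $T(0)\le 0<T(m)$ places them so that $T\le 0$ on $[0,r_1]$ and $T>0$ on $(r_1,m]$, i.e., the single upward switch and hence the second branch hold automatically. The genuinely dangerous configuration is $\alpha>2$ with $T(m)\le 0$, where a hump (negative, then positive, then negative) would create an interior valley of $H$ invisible to the stated formula. The paper kills this by reducing without loss of generality to $T'(0)\ge 0$ and $T'(m)\le 0$ (otherwise $T$ is monotone on $[0,m]$, and with $T(0)\le 0$ and $T(m)\le 0$ it is nonpositive throughout), noting this forces $\alpha\ge 3$ when $b<0$, and then computing that the discriminant of the quadratic $T$ is nonpositive, whence $T\le 0$ on all of $\mathbb{R}$ and $H$ is non-increasing. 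You correctly flag the hump as ``the main obstacle,'' but the tool you propose to dispatch it is false as stated (the counterexample above), and even restricted to $T(m)\le 0$ your sketched inequality does not establish $T\le 0$; without the discriminant computation or an equivalent, the proof has a hole exactly at its crux.
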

\begin{proof}
Consider the function $S(x):=\frac{C_{m+x+1}+1}{(m+x+2)^{\alpha}}$. Then we can get that $S^{\prime}(x)\leq 0$ if and only if $T(x)\leq 0$.  Rewriting the function $T$ by
\[ T(x):=(2-\alpha)ax^2+(2a(m+2)-b(\alpha-1))x+b(m+2)-\alpha((f(m)+1)f(1)+1). \] Clearly $T(0)\leq 0$. It suffices to investigate the case that $T(m)\leq 0$ and $\alpha>2$. Since if $T(m)> 0$ or $\alpha\leq 2$, then this proposition holds directly. Note that
\[ T^{\prime}(x):= 2ax(2-\alpha)+2a(m+2)-b(\alpha-1). \]
Without loss of generality we can assume that $T^{\prime}(0)\geq 0$ and $T^{\prime}(m)\leq 0$ (otherwise this proposition holds directly). This implies that $\alpha\geq 3$ if $b<0$. Now we claim that $T(x)\leq 0$ for every $x\in \mathbb{R}$. 
We only need to show that the discriminant $\Delta$ of $T$ is non-positive. In fact,
\begin{eqnarray*}
&& \frac{\Delta}{2a(\alpha-2)} \\
&:=& (2a(m+2)-b(\alpha-1))^2/(2a(\alpha-2))-2\alpha((f(m)+1)f(1)+1)+2b(m+2) \\
&\leq & m(2am+4a-b\alpha+b)-2\alpha(f(m)+2)+2b(m+2) ~\text{(By $T^{\prime}(0)\geq 0$ and $T^{\prime}(m)\leq 0$)}\\
&\leq & \begin{cases}
m(2am+4a-b)-4(am^2+bm+2)+2b(m+2)\leq 0, & \text{ if }b\geq 0,\\
2a(3-\alpha)+10(1-\alpha)\leq 0, &\text{ if }b< 0.
\end{cases}
\end{eqnarray*}
The last inequalities for the case $b<0$ can be deduced from the fact that $\alpha\geq 3$ and the monotonicity of $m(2am+4a-b\alpha+b)-2\alpha(f(m)+2)+2b(m+2)$ with respect to $b(\geq 1-a)$ and $m(\geq 2)$.
This completes the proof.
\end{proof}
\section{Proof of Theorem \ref{thm:denseproperty} and Corollary \ref{cor:density_of_selfsimilarmeasure}}
\begin{proof}[Proof of Theorem \ref{thm:denseproperty}]
For simplicity, set $\tilde{C}_n:=\frac{C_n}{n^{\alpha}}$ for every $n\geq 1$. By (\ref{Observ:1}) and the superior of $\{\tilde{C}_n\}_{n\geq 1}$, we have \[\limsup_{n\to \infty}  \tilde{C}_n = \sup_{n\geq 1}\tilde{C}_n.\]
Following from  (\ref{Ineqn:addm}), we have that \[\liminf_{n\to \infty}  \tilde{C}_n = \inf_{n\geq 1}\tilde{C}_n\] and
the limit $\lim_{k\to\infty}\tilde{C}_{[um^k]_{m+1}}$ exits for each $u \in\cup_{\ell\geq 0}\Sigma_m^{\ell}$.
 
Set $E:=\{\lim_{k\to\infty}\tilde{C}_{[um^k]_{m+1}}:u\in\cup_{\ell\geq 0}\Sigma_m^{\ell}\}$. Fixed any $\gamma \in (\inf_{n\geq 1}\tilde{C}_n, \sup_{n\geq 1} \tilde{C}_n)\backslash E$, we will construct a subsequence $\{\tilde{C}_{n_k}\}_{k\geq 1}$ of $\{\tilde{C}_n\}_{n\geq 1}$  converging to $\gamma.$ Let $k_0$ and $k_1$ be defined as Proposition \ref{prop:addepsilon} and Proposition \ref{prop:monotoneforlargen} respectively. Set $k_2:=\max\{k_0,k_1\}$ and $L(\gamma):=\{\tilde{C}_{\xi}:\xi\in\{1,\cdots,m\},\tilde{C}_{\xi} \geq \gamma  \}$. Now we take 
\[ n_1 := \min\left\{ \epsilon\in\{1,\cdots,m\}:\tilde{C}_{\epsilon}=\min L(\gamma)\right\},\]
and $n_k=(m+1)^{k-1}n_1$ for $2\leq k\leq k_2$. Assume that $n_k(k\geq k_2)$ has been chosen. We will choose $n_{k+1}$ recursively as follows:
\[ n_{k+1}=(m+1)n_k+\max\left\{ \epsilon\in\{0,1,\cdots,m\}:\tilde{C}_{(m+1)n_k+\epsilon}\geq \gamma \right\}.\]
Then we have that for every $k\geq 1,$ \[\tilde{C}_{n_{k+1}}\leq \tilde{C}_{n_k}.\] This implies that $\lim_{k\to \infty}\tilde{C}_{n_k}$ exists and is larger than $\gamma$. Now it suffices to show that
\[\lim_{k\to \infty}\tilde{C}_{n_k}\leq \gamma.\]
Define the set 
\[ \mathcal{S}:=\{k\in \mathbb{N}: n_k\neq m \mod (m+1)\}.\]
Note that $\mathcal{S}$ is non-empty. We claim that $\mathcal{S}$ is infinite. Otherwise if it is finite, then there exists $k^{*}$ such that for every $k\geq k^{*}$, $n_{k+1}=(m+1)n_k+m$. This contradicts with the value of $\gamma$. By Proposition \ref{prop:monotoneforlargen}, we have that for every $k\geq k_2,$
\[ \tilde{C}_{(m+1)n_k}> \tilde{C}_{(m+1)n_k+1}> \cdots > \tilde{C}_{(m+1)n_k+m}.\]
Hence that there exist infinite $k\geq k_2$ such that $\tilde{C}_{(m+1)n_k+m}<\gamma$. i.e.,
\[ \frac{(f(m)+1)C_{n_k}+f(m)}{((m+1)n_k+m)^{\alpha}}<\gamma.\]
Therefore, we have
\[ \lim_{k\to \infty} \tilde{C}_{n_k}=\lim_{k\to\infty} \frac{C_{n_k}}{{n_k}^{\alpha}}=\lim_{k\to\infty}\frac{(f(m)+1)C_{n_k}+f(m)}{((m+1)n_k+m)^{\alpha}} \leq \gamma.\]
This completes the proof.
\end{proof}
Now we can prove Corollary \ref{cor:density_of_selfsimilarmeasure}. 
\begin{proof}[Proof of Corollary \ref{cor:density_of_selfsimilarmeasure}] Recall that $d(x):=\frac{\mu_{\mathfrak{C}}([0,x])}{x^{1/\alpha}}$. Note that $S_0(0)=0$ and $S_m(1)=1$. By blow-up principle in \cite[Lemma 2.1]{AS99}, we have
\begin{eqnarray*}
\{d(x):x\in(0,1]\} =\left\{d(x):x\in\left[\frac{1}{f(m)+1},1\right]\right\}.
\end{eqnarray*}
Since $d(x)$ is continuous, the minimum and maximum of $d(x)$ on the interval  $[\frac{1}{f(m)+1},1]$ are attained, denoted by $d(x_m)$ and $d(x_M)$ respectively. Then we have that $x_m$ and $x_M$ are both contained in the set $\mathfrak{C}\cap [\frac{1}{f(m)+1},1] $. On the other hand, using the same arugment of \eqref{eqn:linearcase_for_self-similar_measure} or \cite[Corollary 1.3]{CY22}, we have that 
\[ \left\{\frac{x}{\mu_{\mathfrak{C}}^{\alpha}([0,x])}:x\in\mathfrak{C}\cap\left[\frac{f(1)}{f(m)+1},1\right] \right\} = \left[\inf_{n\geq 1}\frac{C_n}{{n}^{\alpha}},  \sup_{n\geq 1}\frac{C_n}{n^{\alpha}}\right]. \]
In other words, we have
\[ \left\{d(x):x\in\left[\frac{1}{f(m)+1},1\right] \cap \mathfrak{C} \right\}  =\left[\inf_{n\geq 1}\frac{n}{{C_n}^{1/\alpha}},  \sup_{n\geq 1}\frac{n}{{C_n}^{1/\alpha}}\right].\]
 This implies that 
\[ \{d(x):x\in(0,1]\} = \left\{d(x):x\in\left[\frac{1}{f(m)+1},1\right] \cap \mathfrak{C} \right\}. \]
Hence that
\[ \left[\liminf_{x\to0^+}d(x),\limsup_{x\to0^+}d(x)\right]\subset \{d(x):x\in(0,1]\}.\]
Following from the fact that $d\left(\frac{x}{(f(m)+1)^k}\right) = d(x)$ for any $k\geq 1$ and $x\in(0,1]$, we have
\[\{d(x):x\in(0,1]\} \subset  \left[\liminf_{x\to0^+}d(x) , \limsup_{x\to0^+}d(x)\right].\]
This completes the proof.
\end{proof}

\section{The limit function induced by Cantor-integers}
First we prove the existence of  $\lambda(x):= \lim_{k\to\infty} \frac{C_{(m+1)^kx}}{((m+1)^kx)^{\alpha}}$ for every $x>0$. In fact, we claim that
\begin{equation}\label{eqn:C(x)}
\lambda(x)=\frac{C_x}{x^{\alpha}}+\frac{\sum_{r=1}^{\infty}f(\epsilon_r)(f(m)+1)^{-r}}{x^{\alpha}},
\end{equation} 
where $x=[\epsilon_{-\ell}\cdots\epsilon_0.\epsilon_1\epsilon_2\cdots]_{m+1}:=\sum_{r=-\ell}^{\infty}\epsilon_r(m+1)^{-r}$ with $\epsilon_r \in \Sigma_{m}$ for each $ r\geq -\ell$.
In fact, $\lfloor (m+1)^k x \rfloor= [\epsilon_{-\ell}\cdots\epsilon_0\epsilon_1\cdots\epsilon_k]_{m+1}.$ Hence
\begin{eqnarray*}
\frac{C_{(m+1)^kx}}{((m+1)^kx)^{\alpha}} &=&\frac{(f(m)+1)^kC_{[\epsilon_{-\ell}\cdots\epsilon_0]_{m+1}}+\sum_{r=1}^{k}f(\epsilon_r)(f(m)+1)^{k-r} }{(f(m)+1)^k x^{\alpha}}\\
&=&\frac{C_{x}+\sum_{r=1}^{k}f(\epsilon_r)(f(m)+1)^{-r} }{ x^{\alpha}}.
\end{eqnarray*}
This imples (\ref{eqn:C(x)}) holds.  There are some observations for $\lambda(x)$.
\begin{itemize}
\item For every positive integer $n$, we have $\lambda(n)=\frac{C_n}{n^{\alpha}}$.
\item For every real number $x>0$, we have 
\begin{equation}\label{eqn:C_m+1((m+1)x)}
\lambda((m+1)x)=\lambda(x).
\end{equation}
\end{itemize}
 We know that the $(m+1)$-expansion is unique by excluding the expansions with a tail of $(m)$s. Recall that a real number $x$ is called $(m+1)$-rational if its $(m+1)$-expansion is finite, otherwise $x$ is called $(m+1)$-irrational. By (\ref{eqn:C_m+1((m+1)x)}), without loss of generality we can always assume that $x\in[1,m+1)$. For simplicity, define a function 
\begin{equation}\label{def:D(x)} D(x):=\sum_{r=1}^{\infty}f(\epsilon_r)(f(m)+1)^{-r} \text{ for }x= \displaystyle{\sum_{r=1}^{\infty}}\epsilon_r (m+1)^{-r} \text{ with }\epsilon_r\in \Sigma_m.
\end{equation}
Let $\mathfrak{C}:=\mathfrak{C}_{m,f}$ and $\mu_\mathfrak{C}$ be defined as in \eqref{def:ifs} and \eqref{def:self-similar_measure} respectively, define a function $g_{\mathfrak{C}}:[0,1]\to[0,1]$ by
\[ g_{\mathfrak{C}}(t):=\mu_{\mathfrak{C}}([0,t]), \]
which is a generalization of the Cantor function since $g_{\mathfrak{C}}(t)$ is the Cantor function if $\mathfrak{C}$ is the middle-thirds Cantor set. Equivalently,
\[ g_{\mathfrak{C}}(t)=\begin{cases}
\displaystyle{\sum_{r=1}^{\infty}}\epsilon_r (m+1)^{-r}, &t=\displaystyle{\sum_{r=1}^{\infty}}f(\epsilon_r)(f(m)+1)^{-r}\in \mathfrak{C} \text{ for } \epsilon_r \in\Sigma_m,\\
\displaystyle{\sup_{y\leq t,~y\in \mathfrak{C}}} g_{\mathfrak{C}}(y), &t\in[0,1]\backslash\mathfrak{C}.
\end{cases} \]
It is not hard to check that $D(x)=\sup\{t\in[0,1] : g_{\mathfrak{C}}(t)\leq x\}$. Hence the function D(x) is an inversion of the generalized Cantor function in some sense. Since $D(x)$ is strictly increasing, it is differentiable almost everywhere by the Lebesgue theorem about monotone function. This implies  $\lambda(x)$ is also differentiable almost everywhere. Moreover, for the continuity of $\lambda(x)$, we have the following explicit result by \cite[Remark 2]{CY22}.
\begin{proposition}\label{prop:cts}

\begin{enumerate}[label=(\roman*)]
\item The function $\lambda(x)$ is continuous at every positive $(m+1)$-irrational number.
\item The function $\lambda(x)$ is right continuous at every $(m+1)$-rational  number, and $\lambda(x)$ is left continuous at $x_0=[\epsilon_0.\epsilon_1\epsilon_2\cdots\epsilon_n]_{m+1}$ if and only if $\epsilon_n\in\{r\in\Sigma_m^+: \Delta f(r)=1\}$.
\end{enumerate}
\end{proposition}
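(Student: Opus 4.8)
The plan is to work from the representation $\lambda(x)=\dfrac{C_{\lfloor x\rfloor}+D(\{x\})}{x^{\alpha}}$, where $\{x\}$ denotes the fractional part of $x$ and $D$ is the strictly increasing function of \eqref{def:D(x)}; this is just a rereading of \eqref{eqn:C(x)}, since $\sum_{r\ge 1}f(\epsilon_r)(f(m)+1)^{-r}=D(\{x\})$ and $C_x=C_{\lfloor x\rfloor}$. Because $x\mapsto x^{\alpha}$ is continuous and positive on $(0,\infty)$, and because $\lambda((m+1)x)=\lambda(x)$ by \eqref{eqn:C_m+1((m+1)x)}, it suffices to study the one-sided continuity of the numerator $N(x):=C_{\lfloor x\rfloor}+D(\{x\})$ for $x\in[1,m+1)$. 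Away from integers $C_{\lfloor x\rfloor}$ is locally constant, so the whole problem collapses to the one-sided behaviour of $D$ at $\{x\}$, the jump of $C_{\lfloor x\rfloor}$ entering only at integer $x_0$.

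For (i), I would prove that $D$ is continuous at every $(m+1)$-irrational argument. The key estimate is a cylinder/tail bound: if the first $N$ $(m+1)$-ary digits of two arguments agree, then monotonicity of $D$ and the identity $\sum_{r>N}f(m)(f(m)+1)^{-r}=(f(m)+1)^{-N}$ give an oscillation of at most $(f(m)+1)^{-N}$. Since an irrational argument lies in the interior of every cylinder containing it, any sufficiently small neighbourhood shares its first $N$ digits, so the oscillation tends to $0$. As the fractional part of an irrational $x_0$ is again irrational and $\lfloor x\rfloor$ is locally constant there, this yields continuity of $\lambda$ at every positive $(m+1)$-irrational point.

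For (ii), right-continuity is the easy half: using the canonical finite expansion of $x_0$, numbers just above $x_0$ share its first digits and carry an arbitrarily small positive tail, so $D(\{x\})\to D(\{x_0\})$ by the same tail bound, while $C_{\lfloor x\rfloor}$ is right-constant (at an integer $x_0$ one has $\{x\}\to 0^{+}$ and $D(0)=0$). The substance is the left-continuity criterion. Writing $x_0=[\epsilon_0.\epsilon_1\cdots\epsilon_n]_{m+1}$ with $\epsilon_n\ge 1$, the points approaching $x_0$ from below have expansions converging to the alternate representation $\epsilon_0.\epsilon_1\cdots\epsilon_{n-1}(\epsilon_n-1)mmm\cdots$, and summing the geometric tail gives
\[
\lim_{x\to x_0^{-}}D(\{x\})=\sum_{r=1}^{n-1}f(\epsilon_r)(f(m)+1)^{-r}+\bigl(f(\epsilon_n-1)+1\bigr)(f(m)+1)^{-n},
\]
whereas $D(\{x_0\})$ carries $f(\epsilon_n)$ in place of $f(\epsilon_n-1)+1$. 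Their difference is $(\Delta f(\epsilon_n)-1)(f(m)+1)^{-n}$, which vanishes precisely when $\Delta f(\epsilon_n)=1$; since $\lfloor x\rfloor$ is constant near the non-integer $x_0$, this is exactly the stated criterion. An integer $x_0=\epsilon_0$ fits the same formula with $n=0$, the left limit of the numerator being $C_{\epsilon_0-1}+D(1^{-})=f(\epsilon_0-1)+1$, which matches $C_{\epsilon_0}=f(\epsilon_0)$ iff $\Delta f(\epsilon_0)=1$.

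The main obstacle I anticipate is this left-continuity step: one must justify rigorously, by a digit-carry argument at the rational boundary, that the expansions of points approaching $x_0$ from below genuinely stabilise to the $(\epsilon_n-1)mmm\cdots$ tail, and then track the geometric sum to isolate the discrepancy $\Delta f(\epsilon_n)-1$. This is also where the present generality sharpens the linear case of \cite[Remark 2]{CY22}: there $\Delta f\equiv q$ is constant, whereas here the criterion genuinely depends on which digit $\epsilon_n$ terminates the expansion, through the possibly non-constant value $\Delta f(\epsilon_n)$.
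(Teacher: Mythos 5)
Your proposal is correct and follows essentially the same route as the paper's own argument (which the source carries over from the linear case of \cite[Remark 2]{CY22}): reduce continuity of $\lambda$ to one-sided continuity of $D$, use the cylinder/tail bound $(f(m)+1)^{-N}$ for shared digit prefixes to get continuity at $(m+1)$-irrationals and right continuity everywhere, and at a rational endpoint compare $D(\{x_0\})$ with the left limit along the alternate expansion $\epsilon_0.\epsilon_1\cdots\epsilon_{n-1}(\epsilon_n-1)mmm\cdots$, isolating the jump $(\Delta f(\epsilon_n)-1)(f(m)+1)^{-n}$. Your explicit treatment of the integer case via $D(1^-)=1$ matches the paper's criterion as the $n=0$ instance, so nothing is missing.
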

At the same time, $\lambda(x)$ satisfies $(\alpha-\delta)$-H\"{o}lder condition almost everywhere for every sufficiently small $\delta>0$. To prove this, first we give the definition of normality.
\begin{definition}
Let $\ell \geq  1$, and let $B_{\ell}$ be a block of $\ell$ digits to the base $(m+1)(\text{recall that }m\geq 1)$. Also let $x_0 = \lfloor x_0 \rfloor+\sum_{r\geq 1}\epsilon_r(m+1)^{-r}$, and denote by $N(k, B_{\ell})$ the number of occurrences of the block $B_{\ell}$ in the initial block $\epsilon_1 \epsilon_2...\epsilon_k$ of $x_0 - \lfloor x_0 \rfloor$. Then $x_0$ is normal if and only if
\[ \lim_{k\to\infty} \frac{N(k,B_{\ell})}{k} = \frac{1}{(m+1)^{\ell}}, \]
for all $\ell\geq 1$ and all blocks $B_{\ell}$ of length $\ell$.
\end{definition}  

It is well-known  that almost all positive real numbers are normal. Now we have the following result.
\begin{proposition}\label{prop:differentiability}
If $x_0$ is normal to the base $(m+1)$, then we have that for every sufficiently small $\delta>0$,
\[ |\lambda(x_0+h)-\lambda(x_0)| = \mathcal{O}(|h|^{\alpha-\delta}), \text{ as }h\to 0, \]
where the implied constant depends only on $x_0$. 
\end{proposition}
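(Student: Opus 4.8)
The plan is to reduce the statement to a modulus-of-continuity estimate for the singular increasing function $D$ of \eqref{def:D(x)}, since that is the only place where the hypothesis of normality (rather than mere irrationality) is needed; the remaining dependence on $h$ enters through the analytic factor $x^{-\alpha}$ and is elementary. Using the scaling relation \eqref{eqn:C_m+1((m+1)x)} I first reduce to $x_0\in[1,m+1)$, and since a normal number is irrational, $\lfloor x\rfloor$ is locally constant near $x_0$ and $\{x_0+h\}=\{x_0\}+h$ for $|h|$ small. Writing $N:=C_{\lfloor x_0\rfloor}+D(\{x_0\})$ and $\Delta D:=D(\{x_0\}+h)-D(\{x_0\})$, the representation $\lambda(x)=\dfrac{C_{\lfloor x\rfloor}+D(\{x\})}{x^{\alpha}}$ gives
\[ \lambda(x_0+h)-\lambda(x_0)=\frac{\Delta D}{(x_0+h)^{\alpha}}+N\left(\frac{1}{(x_0+h)^{\alpha}}-\frac{1}{x_0^{\alpha}}\right), \]
with denominators bounded away from $0$, so the content to be proved is the bound $|\Delta D|=\mathcal{O}(|h|^{\alpha-\delta})$ for the increment of the singular part.

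The elementary input is a common-prefix bound for $D$: if two numbers of $[0,1)$ agree in their first $n$ base-$(m+1)$ digits then, by \eqref{def:D(x)} and the geometric tail $\sum_{r>n}f(m)(f(m)+1)^{-r}=(f(m)+1)^{-n}$, their $D$-values differ by at most $(f(m)+1)^{-n}=(m+1)^{-\alpha n}$, where I use $\alpha=\log_{m+1}(f(m)+1)$. I then fix $n$ with $(m+1)^{-(n+1)}\le|h|<(m+1)^{-n}$, so that $\{x_0\}$ and $\{x_0\}+h$ lie in the same, or in two adjacent, level-$n$ cylinders. When they lie in the same cylinder the prefix bound gives $|\Delta D|\le(f(m)+1)^{-n}=\mathcal{O}(|h|^{\alpha})$ immediately.

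The adjacent case is the crux and the main obstacle. Splitting the increment across the shared boundary $b$ (an $(m+1)$-rational) produces two within-cylinder terms, each $\mathcal{O}((f(m)+1)^{-n})$, plus the jump of $D$ at $b$. A short computation with \eqref{def:D(x)} shows that if the first $n$ digits of $\{x_0\}$ end in a maximal run of exactly $L$ copies of the digit $m$, then the carry pushes the last nonzero digit of $b$ to position $n-L$, and the jump equals $(\Delta f(\cdot)-1)(f(m)+1)^{-(n-L)}\le f(m)\,(f(m)+1)^{L}(f(m)+1)^{-n}$. Hence $|\Delta D|\le (f(m)+1)^{L}\cdot\mathcal{O}(|h|^{\alpha})$, and the only way carries can degrade the naive exponent $\alpha$ is through the length $L=L_n$ of the longest run of $m$'s among the first $n$ digits of $x_0$. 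This is precisely the step that fails for a general irrational and that forces the use of normality.

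Finally, normality bounds $L_n$. I claim $L_n=o(n)$: given $c>0$, fix an integer $L'$ with $(m+1)^{-L'}<c$; a run of $m$'s of length $\ge cn$ would force at least $cn-L'$ occurrences of the block $m^{L'}$ among the first $n$ digits, contradicting the asymptotic count $\sim n(m+1)^{-L'}<cn$ guaranteed by normality for large $n$. Therefore $L_n=o(n)$, so $(f(m)+1)^{L_n}=(m+1)^{\alpha L_n}=(m+1)^{o(n)}=|h|^{-o(1)}$, and the jump estimate becomes $\mathcal{O}(|h|^{\alpha-\delta})$ for every fixed $\delta>0$, with the implied constant depending on $x_0$ only through the rate of growth of its runs. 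Combining both cases yields $|\Delta D|=\mathcal{O}(|h|^{\alpha-\delta})$, and feeding this into the displayed decomposition proves the proposition. I expect the boundary/carry analysis of the third paragraph to be the genuine difficulty, since every other estimate is a soft geometric-series bound, whereas this one is exactly what converts the frequency hypothesis of normality into the loss of the arbitrarily small exponent $\delta$.
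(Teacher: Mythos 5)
Your argument is correct and shares its skeleton with the paper's proof: both reduce the claim to the H\"older estimate $|D(\{x_0\}+h)-D(\{x_0\})|=\mathcal{O}(|h|^{\alpha-\delta})$ for the singular function $D$ of \eqref{def:D(x)}, both fix the scale $n$ with $(m+1)^{-(n+1)}\leq|h|<(m+1)^{-n}$, both observe that the only danger is carry propagation through a run of the digit $m$, and both use normality to limit such runs. The packaging, however, differs. The paper adds $h$ to the digit string of $x_0$ directly and applies normality in a fixed-block, quantitative form: choosing $\ell$ and $n_0$ with $\frac{2}{(m+1)^{\ell}}+\frac{\ell}{n_0}<\frac{\delta}{\alpha}$, it locates a digit $\epsilon_{r_0}\neq m$ with $r_0>tn$ for some $t\in(1-\delta/\alpha,1)$, so that $x_0$ and $x_0+h$ share a prefix of length greater than $tn$ and a single tail bound gives $(f(m)+1)^{-tn}\leq(1+f(m))|h|^{\alpha-\delta}$. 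You instead split at the level-$n$ cylinder boundary $b$, compute the jump of $D$ there exactly as $(\Delta f(\cdot)-1)(f(m)+1)^{-(n-L)}$, and isolate the normality input as the clean lemma that the longest run $L_n$ of $m$'s among the first $n$ digits is $o(n)$; this makes visible exactly where the discontinuities of $D$ (the $(m+1)$-rationals with $\Delta f\geq 2$) enter, at the cost of a less explicit constant (the paper's choice of $\ell,n_0,t$ tracks the dependence on $\delta$ concretely, while your $|h|^{-o(1)}$ formulation hides it). Two small caveats. First, your carry analysis as written treats $h>0$; for $h<0$ the relevant boundary is the left endpoint of the cylinder and borrows propagate through runs of $0$'s, handled by the identical normality count for the block $0^{L'}$ --- the paper is equally terse here (``a similar discussion''). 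Second, your opening reduction is too quick where it declares the analytic factor harmless: since $\alpha-\delta>1$, the drift term $\bigl(C_{\lfloor x_0\rfloor}+D(\{x_0\})\bigr)\bigl((x_0+h)^{-\alpha}-x_0^{-\alpha}\bigr)$ is of exact order $|h|$ and is \emph{not} $\mathcal{O}(|h|^{\alpha-\delta})$; but this defect is inherited from the statement itself, since the paper performs the very same reduction (``it suffices to show''), and what both arguments genuinely establish is the $D$-estimate, equivalently $\lambda(x_0+h)-\lambda(x_0)+\frac{\alpha\lambda(x_0)}{x_0}h=\mathcal{O}(|h|^{\alpha-\delta})$, which is precisely what yields the (commented-out) consequence $\lambda^{\prime}(x_0)=-\frac{\alpha}{x_0}\lambda(x_0)$.
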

\begin{proof}
It suffices to show that $|D(x_0+h)-D(x_0)| = \mathcal{O}(|h|^{\alpha-\delta})$  as $h\to 0$.
Let $x_0 = \lfloor x_0 \rfloor+\sum_{r\geq 1}\epsilon_r(m+1)^{-r}$, and assume $(m+1)^{-(n+1)}\leq h < (m+1)^{-n}$ for some $n\geq 1$. Then
\[ h=\sum_{r\geq n+1} h_r (m+1)^{-r}, 0\leq h_r\leq m, h_{n+1}\neq 0.\] 
Then 
\[ x_0+h=\lfloor x_0 \rfloor+\sum_{r= 1}^n\epsilon_r(m+1)^{-r}+ \sum_{r\geq n+1} (\epsilon_r+h_r) (m+1)^{-r}.\]
There maybe a carry into the $n$-th place in the right of the above equation. We need to estimate how long the carrying continues. Fixed any $\delta \in(0,1)$,  we may choose $n_0$ and  $B_{\ell}:= m^{\ell}$ (i.e., the $\ell$ consecutive block of $m$'s) such that  $\frac{2}{(m+1)^{\ell}}+\frac{\ell}{n_0}<\frac{\delta}{\alpha}$ and 
\[ N(k,B_{\ell})<\frac{2}{(m+1)^{\ell}}k, \text{ for }k\geq n_0. \]

Hence if $n\geq n_0$,  then there exists $t\in(1-\delta/\alpha,1)$ such that the number of occurrences of the block $B_{\ell}$ for $\epsilon_r$ between $t n$ and $n$ is at most \[\frac{2}{(m+1)^{\ell}}n<(1-t)n-\ell.\] Hence there is $r_0>tn$ for which $\epsilon_{r_0}\neq m$, and this implies that 
\[ x_0+h=\lfloor x_0 \rfloor+\sum_{r= 1}^{r_0-1}\epsilon_r(m+1)^{-r}+ \sum_{r\geq r_0} \epsilon_r^{*} (m+1)^{-r}\]
where $\epsilon_r^{*} \in \Sigma_m$ are digits. Therefore
\begin{eqnarray*}
|D(x_0+h)-D(x_0)|&=& \left|\sum_{r\geq r_0}(f(\epsilon)-f(\epsilon^*))(f(m)+1)^{-r}\right|\\
&\leq & (f(m)+1)^{-t n} = (m+1)^{-t\alpha n}\leq (1+f(m))h^{t\alpha}\leq (1+f(m))h^{\alpha-\delta},
\end{eqnarray*}
for $n\geq n_0.$ Thus
\[ |D(x_0+h)-D(x_0)| =\mathcal{O}(h^{\alpha-\delta}), \text{ as }h\to0^+.\]
A similar discussion shows that for every $h>0,$
\[ |D(x_0-h)-D(x_0)| =\mathcal{O}(h^{\alpha-\delta}), \text{ as }h\to0^+.\]
This completes the proof. 
\end{proof}

\begin{proposition}\label{prop:fourierseries}
The function $\lambda(x)$ has the logarithmic Fourier series expansion 
\begin{equation}
\lambda(x) \sim \sum_{n=-\infty}^{\infty} c_n e^{2\pi \iu \log x/\log(m+1)}
\end{equation}
 where 
\begin{equation}
c_n=\frac{1}{\log(m+1)}\int_1^{m+1}\frac{\lambda(x)}{x^{1-\alpha+\gamma_n}}\dd x \text{ with } \gamma_n:=\alpha+\frac{2\pi n\iu }{\log(m+1)},
\end{equation}
and where the infinite series converges to $\lambda(x)$ in the sense of Ces\'{a}ro sum at each continuity point as in Proposition \ref{prop:cts}. Moreover, $c_n=\mathcal{O}(1/|n|)$ as $n\to \infty$ or $n\to -\infty$.
\end{proposition}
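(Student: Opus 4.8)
The plan is to turn the multiplicative periodicity of $\lambda$ into ordinary additive periodicity and then invoke the classical Fourier theory of periodic functions of bounded variation. Set $u:=\log x/\log(m+1)$ and define $G(u):=\lambda((m+1)^{u})$. The functional equation \eqref{eqn:C_m+1((m+1)x)}, namely $\lambda((m+1)x)=\lambda(x)$, translates immediately into $G(u+1)=G(u)$, so $G$ is $1$-periodic. Under this substitution $\dd x/x=\log(m+1)\dd u$ and $x^{-2\pi\iu n/\log(m+1)}=e^{-2\pi\iu n u}$, while $x\in[1,m+1]$ corresponds to $u\in[0,1]$. Since $1-\alpha+\gamma_n=1+2\pi\iu n/\log(m+1)$, a direct change of variables shows that the proposed coefficient $c_n=\frac{1}{\log(m+1)}\int_1^{m+1}\lambda(x)\,x^{-(1-\alpha+\gamma_n)}\dd x$ is exactly the ordinary Fourier coefficient $\int_0^1 G(u)e^{-2\pi\iu n u}\dd u$ of $G$. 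Thus the asserted logarithmic Fourier series of $\lambda$ is merely the usual Fourier series of $G$ read back through $x=(m+1)^{u}$.

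The substantive point is to verify that $G$ (equivalently $\lambda$ restricted to the fundamental domain $[1,m+1]$) is bounded and of bounded variation. Boundedness is immediate, as $\lambda(x)$ lies between $\inf_{n}C_n/n^{\alpha}$ and $\sup_{n}C_n/n^{\alpha}$. For the bounded variation I would use the explicit formula \eqref{eqn:C(x)}: on $[1,m+1]$ one has $\lambda(x)=\bigl(C_{\lfloor x\rfloor}+D(\{x\})\bigr)x^{-\alpha}$, where $C_{\lfloor x\rfloor}$ is a step function with only finitely many jumps (at the integers $2,\dots,m$), the term $D(\{x\})$ is monotone on each interval $[k,k+1)$ because $D$ is strictly increasing by \eqref{def:D(x)}, and $x^{-\alpha}$ is smooth and monotone. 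A finite sum and product of functions of bounded variation is again of bounded variation, so $\lambda$ has finite total variation on $[1,m+1]$ and $G$ has finite total variation on $[0,1]$.

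Given these two facts the conclusion follows from standard harmonic analysis. For the Cesàro statement, $G\in L^{1}([0,1])$, and Fejér's theorem guarantees that the Cesàro means of its Fourier series converge to $G(u_0)$ at every point $u_0$ where $G$ is continuous; since $x\mapsto(m+1)^{x}$ is a homeomorphism of $(0,\infty)$, the continuity points of $G$ are exactly the images of the continuity points of $\lambda$ described in Proposition \ref{prop:cts}, which yields the stated Cesàro convergence. For the decay I would apply the Riemann–Stieltjes integration-by-parts estimate for a periodic $BV$ function: writing $\hat G(n)=\int_0^1 G\,\dd\!\bigl(e^{-2\pi\iu n u}/(-2\pi\iu n)\bigr)$, the boundary terms cancel by the periodicity $G(0)=G(1)$, leaving $|c_n|=|\hat G(n)|\le \operatorname{Var}_{[0,1]}(G)/(2\pi|n|)$, hence $c_n=\mathcal{O}(1/|n|)$ as $n\to\pm\infty$.

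The main obstacle is the bounded-variation verification and its bookkeeping: one must confirm that the jumps of the step part $C_{\lfloor x\rfloor}$ and of $D(\{x\})$ at the integers contribute only finitely much variation, and that multiplication by $x^{-\alpha}$ does not destroy the $BV$ property (it does not, since $x^{-\alpha}$ is bounded with bounded derivative on $[1,m+1]$). Once $\lambda$ is known to be $BV$ on a fundamental domain, both the Cesàro convergence and the $\mathcal{O}(1/|n|)$ decay are routine; the only care needed is to ensure that the integration-by-parts boundary terms genuinely vanish, which they do by periodicity of $G$.
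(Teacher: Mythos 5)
Your proposal is correct and takes essentially the same route as the paper: the paper substitutes $g(\theta)=\lambda((m+1)^{\theta/(2\pi)})$ to obtain a $2\pi$-periodic function, invokes Fej\'{e}r's theorem (Stein--Shakarchi, Theorem 5.2) for the Ces\`{a}ro convergence at continuity points, and deduces $c_n=\mathcal{O}(1/|n|)$ from the monotonicity of $D$ via Taibleson's result on Fourier coefficients of functions of bounded variation. Your differences are only in presentation: you normalize to period $1$ instead of $2\pi$, make explicit the bounded-variation bookkeeping for $\lambda(x)=\bigl(C_{\lfloor x\rfloor}+D(\{x\})\bigr)x^{-\alpha}$ that the paper leaves implicit in its appeal to the monotonicity of $D$, and prove the decay by direct integration by parts rather than citing Taibleson.
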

\begin{proof}
Let $g(x):=\lambda((m+1)^{x/(2\pi)}).$ Then $g(x)$ has the period $2\pi$ and thus $g$  has a Fourier series
\[ g(x)\sim \sum_{n=-\infty}^{\infty}c_n e^{\iu n \theta},~\text{ with }c_n=\frac{1}{2\pi}\int_0^{2\pi} g(\theta)e^{-\iu n\theta} \dd \theta, \]
Then by \cite[Theorem 5.2 in p. 53]{SS03}, we have that the infinite series converges to $g(x)$ in the sense of Ces\'{a}ro sum at each point $x_0$ which makes $(m+1)^{x_0/(2\pi)}$ be a continuity point as in Proposition \ref{prop:cts}. Using
$\lambda(x) =g(2\pi \log x/\log(m+1))$, this easily yields the desired result. Recall that $D(x)$ is monotone, it follows from \cite{T67} that $c_n=\mathcal{O}(1/|n|)$.
\end{proof}

\section{Proof of Theorem \ref{thm:perron-mellin_formula}}
Following from \cite[Theorem 2.1]{FGK94} with the case $m=1$, we have the following lemma.
\begin{lemma} \label{lem:perron-mellin_Forumla} Let c>0 lie in the half-plane of absolute convergence of $\sum_k v_k k^{-s}$. Then we have
\begin{equation*}
\sum_{1\leq k<n} v_k\left(1-\frac{k}{n}\right) = \frac{1}{2\pi \iu}\int_{c-\iu\infty}^{c+\iu\infty} \left(\sum_{k\geq 1} \frac{v_k}{k^s}\right)n^s\frac{\dd s}{s(s+1)}
\end{equation*}
\end{lemma}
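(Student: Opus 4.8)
The plan is to reduce the identity to the classical Mellin--Perron discontinuous kernel and then sum term by term. Throughout write $(c)$ for the vertical line $\Re s = c$ with $c>0$. The cornerstone is the closed evaluation
\begin{equation*}
K(y) := \frac{1}{2\pi\iu}\int_{(c)}\frac{y^{s}}{s(s+1)}\dd s = \begin{cases} 1-\dfrac{1}{y}, & y\geq 1,\\ 0, & 0<y<1.\end{cases}
\end{equation*}
Granting this, I would put $y=n/k$ and observe that the kernel selects exactly the indices $k<n$: for $k<n$ one has $y>1$ and $K(n/k)=1-k/n$; for $k=n$ one has $K(1)=0$, which agrees with the vanishing factor $1-k/n$; and for $k>n$ one has $y<1$ and $K(n/k)=0$. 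Consequently $\sum_{k\geq 1} v_k\,K(n/k)=\sum_{1\leq k<n} v_k\bigl(1-\tfrac{k}{n}\bigr)$, which is precisely the left-hand side of the lemma.

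First I would justify interchanging the summation $\sum_{k\geq 1}$ with the contour integral. On the line $\Re s=c$ one has $|k^{-s}n^{s}|=n^{c}k^{-c}$ and $|s(s+1)|^{-1}=\mathcal{O}(|s|^{-2})$ as $|s|\to\infty$, whence
\begin{equation*}
\sum_{k\geq 1}\int_{(c)}\left|\frac{v_k}{k^{s}}\,n^{s}\,\frac{1}{s(s+1)}\right|\,|\dd s| = n^{c}\Bigl(\sum_{k\geq 1}\frac{|v_k|}{k^{c}}\Bigr)\int_{-\infty}^{\infty}\frac{\dd t}{|c+\iu t|\,|c+1+\iu t|}<\infty .
\end{equation*}
The $k$-sum is finite because $c$ lies in the half-plane of absolute convergence of $\sum_k v_k k^{-s}$, and the $t$-integral is finite because its integrand decays like $t^{-2}$. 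Fubini's theorem then allows moving the sum inside the integral and factoring each summand as $v_k\,K(n/k)$.

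It remains to establish the kernel identity by contour shifting. The integrand $y^{s}/(s(s+1))$ is meromorphic with simple poles only at $s=0$ and $s=-1$ and decays like $|s|^{-2}$ along vertical lines, so the integral converges absolutely. When $y\geq 1$ I would close $(c)$ to the left along a large semicircle, on which $|y^{s}|=y^{\Re s}\to 0$ as $\Re s\to-\infty$; the arc contribution vanishes by the $|s|^{-2}$ bound, and the residue theorem gives $K(y)=\operatorname{Res}_{s=0}+\operatorname{Res}_{s=-1}=1-\tfrac{1}{y}$. When $0<y<1$ I would instead close to the right, where $|y^{s}|\to 0$ as $\Re s\to+\infty$ and no pole is enclosed, so $K(y)=0$; the value $K(1)=0$ follows from the leftward closure, capturing $1-1=0$.

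The step requiring the most care is the arc estimate in the contour closure for $K(y)$: one must take the semicircle radii so that both poles are enclosed for all large radii and bound the integral over the arcs uniformly. The $|s|^{-2}$ decay of $1/(s(s+1))$ together with the boundedness of $|y^{s}|$ on the relevant half-plane makes this routine, but it is the single point at which a limiting argument has to be made precise. Everything else is a direct application of Fubini's theorem and the residue calculus.
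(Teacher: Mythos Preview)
Your proof is correct. The kernel identity, the Fubini justification, and the arc estimates are all handled properly; in particular, your observation that $|y^{s}|\leq y^{c}$ on the relevant semicircle (bounded independently of $R$) is exactly what makes the $|s|^{-2}$ decay sufficient to kill the arc contribution.

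The paper itself does not prove this lemma at all: it simply cites \cite[Theorem~2.1]{FGK94} (the Mellin--Perron summation formula of Flajolet--Grabner--Kirschenhofer--Prodinger--Tichy) in the case $m=1$. Your argument is precisely the standard proof of that theorem, so there is no methodological difference to speak of---you have supplied the details that the paper outsources to the reference. If anything, your write-up is more self-contained than the paper's treatment.
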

Consequently, we have the following identity (cf. \cite[Equation (2.6)]{FGK94}).
\begin{equation}\label{eqn:identity_for_perron_formula}
\int_{-1/4-\iu \infty}^{-1/4+\iu \infty} \zeta(s)n^s \frac{\dd s}{s(s+1)}=0.
\end{equation}

\begin{proof}[Proof of Theorme \ref{thm:perron-mellin_formula}]
Recall that $\Delta C_n:= C_n-C_{n-1}$ with $n\geq 1$. It is not hard to get that for every $n\geq 1$,
\[ \Delta C_{(m+1)n}=(f(m)+1)\Delta C_{n}-f(m), \]
and for every $n\geq 0$ and every $ r\in\{1,\dots, m\}$,
\[ \Delta C_{(m+1)n+r}=\Delta f(r).\]
It follows that 
\begin{eqnarray*}
\sum_{n=1}^{\infty} \frac{\Delta C_n}{n^s} &=&\sum_{r=1}^m\sum_{n=0}^{\infty} \frac{\Delta f(r)}{((m+1)n+r)^s}+\sum_{n=1}^{\infty}\frac{(f(m)+1)\Delta C_n-f(m)}{((m+1)n)^s} \\ 
&=&\sum_{r=1}^{m}\frac{\Delta f(r)}{(m+1)^s}\zeta\left(s,\frac{r}{m+1}\right)+\frac{f(m)+1}{(m+1)^s}\sum_{n=1}^{\infty} \frac{\Delta C_n}{n^s}-\frac{f(m)}{(m+1)^s}\zeta(s).
\end{eqnarray*}
Hence that
\[\sum_{n=1}^{\infty} \frac{\Delta C_n}{n^s} = \frac{\sum_{r=1}^m \Delta f(r) \zeta\left(s,\frac{r}{m+1}\right)-f(m)\zeta(s)}{(f(m)+1)((m+1)^{s-\alpha}-1)}.\]
By Lemma \ref{lem:perron-mellin_Forumla}, with $v_k=\Delta C_k$, for any $c> \alpha+1$,  we have
\begin{equation}\label{eqn:perron-mellin_for_cantor_integers}
\frac{S(n)}{n}= \frac{1}{2\pi \iu }\int_{c-\iu\infty}^{c+\iu\infty} \frac{\sum_{r=1}^m \Delta f(r) \zeta\left(s,\frac{r}{m+1}\right)-f(m)\zeta(s)}{(f(m)+1)((m+1)^{s-\alpha}-1)}n^s\frac{\dd s}{s(s+1)}
\end{equation}
The integrand in \eqref{eqn:perron-mellin_for_cantor_integers} has simple poles at $s=0$, $s=1$ and $s=\gamma_k$ for every $k\in \mathbb{Z}$. Shifting the line of integration to $\Re s=-\frac{1}{4}$ and taking residue into account, we get 
\begin{equation}\label{eqn:perron-mellin_for_cantor_integers2}
\frac{S(n)}{n}= \frac{\sum_{r=1}^m f(r)}{(m+1)f(m)}-2+F(\log_{m+1}^n)+R(n),
\end{equation}
where the Fourier series of $F(u)$ is 
\[ \frac{n^{\alpha}}{(f(m)+1)\log(m+1)}\sum_{k\in\mathbb{Z}} \frac{\sum_{r=1}^m \Delta f(r) \zeta\left(\gamma_k,\frac{r}{m+1}\right)-f(m)\zeta(\gamma_k)}{ \gamma_k(\gamma_k+1) }e^{2\pi \iu k u}, \]
which is the sum of residues of the integrand at the poles $s=\gamma_k$. We still have to analyse the remainder term 
\[ R(n):=\frac{1}{2\pi \iu (f(m)+1)}\int_{-1/4-\iu\infty}^{-1/4+\iu\infty} \frac{\sum_{r=1}^m \Delta f(r) \zeta\left(s,\frac{r}{m+1}\right)-f(m)\zeta(s)}{(m+1)^{s-\alpha}-1}n^s\frac{\dd s}{s(s+1)} .\] 
The integral converges since $|\zeta(-\frac{1}{4}+\iu t,a)-a^{\frac{1}{4}-\iu t}|\ll |t|^{3/4}$ where the constant does not depend on $a$ and $t$ (cf. \cite[Theorem 12.23]{A76}). Using the expansion 
\[ \frac{1}{(m+1)^{s-\alpha}-1}=-1-(m+1)^{s-\alpha}-(m+1)^{2s-2\alpha}-(m+1)^{3s-3\alpha}-\cdots, \]
in the integrand of $R(n)$, which is legitimate since $\Re (s-\alpha)<0$. By \eqref{eqn:identity_for_perron_formula}, we have
\begin{eqnarray*}
 R(n) &=& \frac{1}{2\pi \iu (f(m)+1)}\int_{-1/4-\iu\infty}^{-1/4+\iu\infty} \frac{\sum_{r=1}^m \Delta f(r) \zeta\left(s,\frac{r}{m+1}\right)}{(m+1)^{s-\alpha}-1}n^s\frac{\dd s}{s(s+1)} \\
 &=& - \sum_{k\geq 0} \frac{\sum_{r=1}^m \Delta f(r)R^{\prime}((m+1)^kn,r)}{(f(m)+1)^{k+1}},   \\
\end{eqnarray*}
where
\[ R^{\prime}(n,r):= \frac{1}{2\pi \iu }\int_{-1/4-\iu\infty}^{-1/4+\iu\infty}  \zeta\left(s,\frac{r}{m+1}\right) n^s\frac{\dd s}{s(s+1)} \]
Applying \cite[Equation (2.3)]{FGK94} by setting $\lambda_k\equiv 1,\mu_k\equiv k+\frac{r}{m+1}$, $x\equiv n^{-1}$ and $f(x) \equiv (1-x)H_0(x)$, we have 
\[ \sum_{1\leq k<n}\left(1-\frac{k+\frac{r}{m+1}}{n}\right)= \frac{1}{2\pi \iu } \int_{3/2-\iu\infty}^{3/2+\iu\infty}  \zeta\left(s,\frac{r}{m+1}\right) n^s\frac{\dd s}{s(s+1)}. \]
Shifting the the contour of $R^{\prime}(n,r)$ back to $\Re s=\frac{3}{2}$, by taking into account the residue at s$=0$ and $s=1$,
\[  R^{\prime}(n,r) = \frac{n+1}{2}-\frac{r}{m+1}+\sum_{1\leq k<n}\left(1-\frac{k+\frac{r}{m+1}}{n}\right)=n-\left(2-\frac{1}{n}\right)\frac{r}{m+1}.\]
Therefore, we have
\begin{eqnarray*}
 R(n) &=& - \sum_{k\geq 0} \frac{\sum_{r=1}^m \Delta f(r)\left((m+1)^k n-\left(2-\frac{1}{(m+1)^kn}\right)\frac{r}{m+1}\right)}{(f(m)+1)^{k+1}} \\
 &=& -n\frac{f(m)}{f(m)-m}+2-\frac{2\sum_{r=1}^m f(r)}{f(m)(m+1)}-\frac{1}{n}\frac{f(m)(m+1)-\sum_{r=1}^m f(r)}{f(m)(m+1)+m}
\end{eqnarray*}
Substituting $R(n)$ into \eqref{eqn:perron-mellin_for_cantor_integers2}, the proof is complete.
\end{proof}

\emph{Acknowledgments.} The first author thanks C.-Y. Cao for her valuable discussion. The work is supported by NSFC (Grant No. 11701202).

\end{document}